\newtheorem{theorem}{Theorem}[section]
\newtheorem{definition}{Definition}[section]
\newtheorem{remark}{Remark}[section]
\newtheorem{lemma}{Lemma}[section]
\newtheorem{proposition}{Proposition}[section]
\newtheorem{example}{Example}
\numberwithin{equation}{section}
\begin{document}
\title[$ p( x )$- laplacian in Sobolev spaces with variable exponents in complete manifolds]{Existence and Uniqueness of Weak solution of $ p( x ) $- laplacian in Sobolev spaces with variable exponents in complete manifolds}
\author[O. Benslimane, A. Aberqi and  J. Bennouna]
{O. Benslimane$^1$, A. Aberqi$^2$ and  J. Bennouna$^1$}
\address{O. Benslimane, A. Aberqi and  J. Bennouna\newline
$^{1}$ Sidi Mohamed Ben Abdellah University, Faculty of Sciences Dhar Al Mahraz, Department of Mathematics, B.P 1796 Atlas Fez, Morocco.\newline
$^{2}$ Sidi Mohamed Ben Abdellah University, National School of Applied Sciences Fez, Morocco.}
\email{$^1$omar.benslimane@usmba.ac.ma}
\email{$^2$aberqi\_ahmed@yahoo.fr}
\email{$^3$jbennouna@hotmail.com}

\subjclass[2010]{35J47, 35J60.}
\keywords{Non-trivial solution; Lebesgue space with variable exponent; Sobolev spaces Riemannian manifolds; Mountain pass Theorem.}
\maketitle
\textbf{Abstract.} The paper deals with the existence and uniqueness of a non-trivial solution to non-homogeneous $ p ( x ) -$laplacian equations, managed by non polynomial growth operator in the framework of variable exponent Sobolev spaces on Riemannian manifolds. The mountain pass Theorem is used.
\section{Introduction}

Let $ ( M, g ) $ be a complete non-compact Riemannian manifold, we consider the following equation
\begin{equation}\label{1.1}
 - \Delta_{p( x )} u( x ) + h( x, u( x ), \nabla u( x ) ) + | \, u( x ) \,|^{p( x ) - 2} u( x ) = f( x, u( x ) ),
\end{equation} 
where  $-  \Delta_{p( x )} u( x ) = - \,div\,( | \, \nabla u( x ) \, |^{p( x ) - 2} \,.\, \nabla u( x ) ) $ is the $ p( x ) $-laplacian in $( M, g )$. The conditions assumed on the functions $f$ and $h$ are:\\
$ ( f_{1} ): \, f( x, 0 ) = 0 $ and $f$ is measurable to the first variable and continuous to the second variable.\\
$ ( f_{2} ): $ There exists $ \beta \in [ \, p^{-}, p^{+} ] $ such as
$$ 0 < \int_{M} F( x, \alpha ) \,\,dv_{g} ( x ) \leq \int_{M} f( x, \alpha ) \,.\, \frac{\alpha}{\beta} \,\, dv_{g} ( x ) \,\, \mbox{a.e} \,\,x \in M, $$
where $ \displaystyle F( x, \alpha ) = \int_{0}^{\alpha} f( x, t ) \,\, dt $ being the primitive of $ f( x, \alpha )$.\\
$ ( f_{3} ): \, \displaystyle \lim_{| \, \alpha \, | \rightarrow \infty} \frac{f( x, \alpha )}{| \, \alpha \, |^{p( x ) - 1}} = 0 $ uniformly a.e $ x \in M $.\\
\begin{example} $$ f( x, \alpha ) = c \, | \, \alpha \, |^{K( x ) - 1} \,\, \forall c > 0, \,\, \beta \, < \, K( x )\, < p( x ) $$ is a function satisfying the above conditions.
\end{example}
$ ( h_{1} ): \, h( x, s, \xi ) : M \times \mathbb{R} \times \mathbb{R}^{N} \longrightarrow \mathbb{R}^{N} $ be a Carathéodory function such as for a.e $ x \in M $ and for all $ s \in \mathbb{R}$, $ \xi \in \mathbb{R}^{N}$, 
$$ | \, h( x, s, \xi )  \, | \leq \gamma ( x ) + l( s ) \,.\, | \, \xi \, |^{p( x )}, $$
where $ l : \, \mathbb{R} \longrightarrow \mathbb{R}^{+} $ is a continuous increasing positive function that belongs to $ L^{\infty} ( M ) $ and $ \gamma ( x ) \in L^{1} ( M ) $.

\noindent The study of variational problems with non-standard growth conditions is an interesting topic in recent years. $ p( x) -$growth condition can be regarded as an important case of non-standard $ ( p, q )-$growth conditions. Recently, the study of these kind of problems has attracted more and more attention. For example Fan Xian-Ling and Zhang Qi-Hu,  in \cite{fan2003existence}, proved the existence of solutions to the Dirichlet problem of $ p( x )-$laplacian 
$$ \begin{cases}
\, -\mbox{div}  (\, | \, \nabla u \, |^{p( x ) -2} \, \nabla u \, )\, = \, f ( x,  u ) & \text{$x \in \Omega$ }, \\[0.3cm]
\, u \,= \, 0 & \text{$ x \in \partial \Omega$ }, 
\end{cases} $$ 
with several sufficient conditions, and a criterion of existence for an infinite number of pairs of solutions to this problem. For more result we refer the reader to \cite{barnas2012existence, radulescu2015partial}. The typical applications of variable exponent equations include models for electrorheological fluids \cite{acerbi2002regularity}, image restoration processing \cite{hen2006variable}, non-Newtonian fluid dynamics \cite{gwiazda2010monotonicity}, Poisson equation \cite{diening2011lebesgue}, elasticity equations \cite{gwiazda2015thermo, zhikov1997meyer}, and thermistor model \cite{zhikov1997some}.\\
Moving on to another field undergoing great development; the Sobolev space on Riemannian manifolds. The theory of Sobolev space for non compact manifold arose in the 1970s with the work of Aubin, Cantor Hoffman, and Spruck, many of the results presented in their lecture notes have been collected between the 1980s and the 1990s. It has been studied very intensively for over fifty years see \cite{aubin1982nonlinear, hebey2000nonlinear, gadea2012analysis} and also e.g \cite{gala2012new, polidoro2008harnack}.This is also the case for the applications already mentioned to scalar curvature and generalized scalar curvature equation, we quote \cite{benalili2009multiplicity}. Additionally, Yamabe problem for conformal metrics with prescribed scalar curvature \cite{trudinger1968remarks}, and to obtain isoperimetric type inequalities \cite{hebey2000nonlinear}.  \\

Considering that some basic properties of the standard Lebesgue space are not valid in the variable exponent case. For example, Zhikov \cite{zhikov1997some} observed that in general smooth functions are not dense in $ W^{K, p( . )} ( \Omega )$. Besides, the challenges coming are due to the absence of topological properties like convergence and embedding.\\

In this paper, we will be applying the Sobolev spaces with their variable exponents on the non-compact Riemannian manifolds theory to our equation \eqref{1.1}. As for the structure of the paper, we will be sectioning it into: Recalling some definitions and Lemmas. We will then move to prove the existence of non-trivial solution using the mountain pass Theorem, and we will finish it by a demonstration of the uniqueness of non-trivial solution with $ f $ as the Contraction Lipschitz Continuous function.

\section{Framework Space: Notations and Basic Properties}
First of all, we must recall the most important and pertinent properties and notations, by that, referring to \cite{aubin1982nonlinear, gadea2012analysis,  radulescu2015partial} for more details.
\begin{definition}
Let $ \nabla $ be the Levi-Civita connection. If $u$ is a smooth function on $M$, then $ \nabla^{k} u $ denotes the $k-$th covariant derivative of $u$, and $ | \, \nabla^{k} u \, | $ the norm of $ \nabla^{k} u $ defined in local coordinates by
$$ | \, \nabla^{k} u \, |^{2} = g^{i_{1} j_{1}} \cdots g^{i_{k} j_{k}} \, ( \nabla^{k} u )_{i_{1} \cdots i_{k}} \, ( \nabla^{k} u )_{j_{1} \cdots j_{k}} $$
where Einstein's convention is used.\\
$\hspace*{1cm} \bullet $ Given a variable exponent $ p $ in $ \mathcal{P} ( M ) $ and a natural number $k$, introduce 
$$ C^{p(.)}_{k} ( M ) = \{ \, u \in C^{\infty} ( M ) \,\, \mbox{such that } \,\, \forall j \,\, 0 \leq j \leq k \,\, | \, \nabla^{k} u \, | \in L^{p( . ) } \, \} $$
on $ C^{p( . )}_{k} ( M ) $ define the norm 
$$ || \, u \, ||_{L^{p( . )}_{k}} = \sum_{j = 0}^{k} || \, \nabla^{j} u \, ||_{L^{p( . )}} $$
\end{definition}
\begin{definition}
Given $ ( M, g ) $ a smooth Riemannian manifold, and $ \gamma : \, [\, a, \, b \, ] \longrightarrow M $ a curve of class $ C^{1} $, the length of $ \gamma $ is 
$$ l( \gamma ) = \int_{a}^{b} \sqrt{g \, ( \, \frac{d \gamma }{d t }, \, \frac{d \gamma}{d t}\, )} \,\,dt, $$
and for a pair of points $ x, \, y \in M$, we define the distance $ d_{g} ( x, y ) $ between $x$ and $y$ by 
$$ d_{g} ( x, y ) = \inf \, \{ \, l( \gamma ) : \, \gamma: \, [ \, a, \, b \,] \rightarrow M \,\, \mbox{such that} \,\, \gamma ( a ) = x \,\, \mbox{and} \,\, \gamma ( b ) = y \, \} $$
\end{definition}
\begin{definition}
A function $ s: \, M \longrightarrow \mathbb{R} $ is log-Hölder continuous if there exists a constant $c$ such that for every pair of points $ \{ x, \, y \} $ in $ M$ we have
$$ | \, s( x ) - s( y ) \, | \leq \frac{c}{log ( e + \frac{1}{d_{g} ( x, y )} \, )}. $$
We note by $ \mathcal{P}^{log} ( M ) $ the set of log-Hölder continuous variable exponents.
\end{definition}
\begin{proposition}\label{main}
Let $ p \in \mathcal{P}^{log} ( M ) $, and let $ ( \Omega, \phi ) $ be a chart such that 
$$ \frac{1}{2} \delta_{i j } \leq g_{i j} \leq 2 \, \delta_{i j } $$
as bilinear forms, where $ \delta_{i j} $ is the delta Kronecker symbol. Then $ po\phi^{-1} \in \mathcal{P}^{log} ( \phi ( \Omega ) ).$
\end{proposition}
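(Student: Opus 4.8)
The plan is to pull the log-Hölder condition on $(M,d_g)$ back to the Euclidean chart $(\phi(\Omega),|\cdot|)$ by showing that the metric pinch $\tfrac12\delta_{ij}\le g_{ij}\le 2\delta_{ij}$ makes $d_g$ comparable to the Euclidean distance between coordinates; after that only an elementary estimate on the map $t\mapsto\log(e+1/t)$ remains. Before starting I would reduce to the case where $\phi(\Omega)$ is convex (a Euclidean ball): log-Hölder continuity is a local condition, any chart can be shrunk to one with convex image, and the hypothesis $\tfrac12\delta_{ij}\le g_{ij}\le 2\delta_{ij}$ passes to sub-charts. This is exactly where a little care is needed, as explained at the end.

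First I would prove the one-sided bound $d_g(x,y)\le\sqrt2\,|\phi(x)-\phi(y)|$ for $x,y\in\Omega$. Take the affine segment $\sigma(t)=(1-t)\phi(x)+t\phi(y)$, $t\in[0,1]$, which lies in $\phi(\Omega)$ by convexity, and set $\gamma:=\phi^{-1}\circ\sigma$, a $C^{1}$ curve in $M$ from $x$ to $y$. In the coordinates of $(\Omega,\phi)$ its velocity is the constant vector $\dot\sigma=\phi(y)-\phi(x)$, so, using $g_{ij}\le 2\delta_{ij}$ as bilinear forms,
\[
 g\big(\dot\gamma(t),\dot\gamma(t)\big)=g_{ij}(\gamma(t))\,\dot\sigma^{i}\dot\sigma^{j}\le 2\,|\phi(y)-\phi(x)|^{2}.
\]
Integrating the square root over $[0,1]$ gives $l(\gamma)\le\sqrt2\,|\phi(y)-\phi(x)|$, whence $d_g(x,y)\le l(\gamma)\le\sqrt2\,|\phi(x)-\phi(y)|$. (The reverse inequality $d_g(x,y)\ge\tfrac1{\sqrt2}|\phi(x)-\phi(y)|$ follows symmetrically from $g_{ij}\ge\tfrac12\delta_{ij}$, but I will not need it.)

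Next I would record the elementary fact that
\[
 A:=\sup_{t>0}\ \frac{\log\!\big(e+\frac1t\big)}{\log\!\big(e+\frac1{\sqrt2\,t}\big)}
\]
is finite: the ratio is continuous and $>1$ on $(0,\infty)$ and tends to $1$ as $t\to0^{+}$ and as $t\to\infty$, hence $A\in[1,\infty)$. Since $s\mapsto\log(e+1/s)$ is decreasing, the bound of the previous step yields
\[
 \log\!\Big(e+\frac1{d_g(x,y)}\Big)\ \ge\ \log\!\Big(e+\frac1{\sqrt2\,|\phi(x)-\phi(y)|}\Big)\ \ge\ \frac1A\,\log\!\Big(e+\frac1{|\phi(x)-\phi(y)|}\Big).
\]
Writing $\tilde p:=p\circ\phi^{-1}$, $\tilde x=\phi(x)$, $\tilde y=\phi(y)$ and using $p\in\mathcal P^{log}(M)$ (so $|p(x)-p(y)|\le c/\log(e+1/d_g(x,y))$ for some $c>0$), I would conclude
\[
 |\tilde p(\tilde x)-\tilde p(\tilde y)|=|p(x)-p(y)|\le\frac{c}{\log(e+1/d_g(x,y))}\le\frac{cA}{\log(e+1/|\tilde x-\tilde y|)},
\]
which is precisely $p\circ\phi^{-1}\in\mathcal P^{log}(\phi(\Omega))$, with log-Hölder constant $cA$.

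The hard (really, the only delicate) part is the geometric step: to control $d_g(x,y)$ by $|\phi(x)-\phi(y)|$ one must join $x$ and $y$ inside the chart by a curve whose $g$-length is comparable to $|\phi(x)-\phi(y)|$, and for the affine segment to do this its image must stay in $\phi(\Omega)$. The clean way around this is the convexity reduction mentioned at the outset; without it one would have to quantify how the segment can exit $\phi(\Omega)$, which is why such statements are customarily phrased for charts with convex (e.g. ball-shaped) image. Everything else is bookkeeping.
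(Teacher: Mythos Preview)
The paper does not supply a proof of this proposition: it appears in Section~2 among the preliminary facts that are ``recalled'' with a reference to \cite{aubin1982nonlinear, gadea2012analysis, radulescu2015partial}, and no argument is given. So there is nothing in the paper to compare your attempt against.

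On its own merits your argument is the standard one and is correct where it is complete. The core step---bounding $d_g(x,y)$ above by $\sqrt{2}\,|\phi(x)-\phi(y)|$ via the straight segment in coordinates, then pushing this through the decreasing function $t\mapsto\log(e+1/t)$ with the auxiliary constant $A$---is exactly how this transfer is done in the literature (e.g.\ Gaczkowski--G\'orka). Your observation that only the upper bound on $d_g$ is needed, hence only $g_{ij}\le 2\delta_{ij}$, is also right.

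The one point you flag yourself deserves a slightly sharper statement. Your ``reduction to convex image'' is not quite a reduction: shrinking the chart proves $p\circ\phi^{-1}\in\mathcal P^{log}$ on the smaller set, not on the original $\phi(\Omega)$, and log-H\"older continuity as defined here is a two-point (global on the domain) condition, not a purely local one. What actually saves the non-convex case is that variable exponents in $\mathcal P^{log}$ are bounded, so for pairs $\tilde x,\tilde y\in\phi(\Omega)$ with $|\tilde x-\tilde y|$ bounded away from $0$ the log-H\"older inequality is automatic, and for nearby pairs one can work inside a small convex sub-chart. If you make that splitting explicit (near/far dichotomy), the argument closes with no residual assumption on $\phi(\Omega)$; as written, your proof is complete only under the convexity hypothesis you added.
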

\begin{definition}
We say that the n-manifold $ ( M, g ) $ has property $ B_{vol} ( \lambda, v ) $ if its geometry is bounded in the following sense:\\
$ \hspace*{1cm} \bullet \,\, Rc ( g ) \geq \lambda ( n - 1 ) \, g $ for some $ \lambda $\\
$ \hspace*{1cm} \bullet  $ There exists some $ v > 0 $ such that $ | \, B_{1} ( x ) \, |_{g} \geq v \,\, \forall x \in M.$
\end{definition}
\begin{proposition}\label{main}
Let $ ( M, g ) $ be a complete Riemannian n-manifold. Then, if the embedding $ L^{1}_{1} ( M ) \hookrightarrow L^{\frac{n}{n - 1}} ( M )$ holds, then whenever the real numbers $p$ and $q$ satisfy $$ 1 \leq p < n, $$ and $$ p \leq q \leq p* = \frac{n p}{n - p}, $$ the embedding $ L^{p}_{1} ( M ) \hookrightarrow L^{q} ( M ) $ also holds.
\end{proposition}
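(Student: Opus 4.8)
The plan is to reduce this whole family of embeddings to the single endpoint inclusion $L^p_1(M)\hookrightarrow L^{p^*}(M)$ and then recover every intermediate exponent $q\in[p,p^*]$ by interpolation between $L^p$ and $L^{p^*}$. The case $p=1$ needs no work: it is the hypothesis itself, together with the trivial inclusion $L^1_1(M)\hookrightarrow L^1(M)$ and interpolation between $L^1$ and $L^{n/(n-1)}$. So I would assume $1<p<n$ from now on and set $\gamma=\frac{(n-1)p}{n-p}$, noting the three identities I will need: $\gamma\in(p,p^*)$, $\frac{\gamma n}{n-1}=p^*$, and $(\gamma-1)\frac{p}{p-1}=p^*$.

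For the endpoint estimate I would apply the assumed inequality $\|v\|_{L^{n/(n-1)}}\le C\big(\|\nabla v\|_{L^1}+\|v\|_{L^1}\big)$ to $v=|u|^{\gamma}$. Since $\nabla|u|^\gamma=\gamma\,|u|^{\gamma-1}\nabla|u|$, Hölder's inequality with exponents $p$ and $\frac{p}{p-1}$ bounds $\|\nabla|u|^\gamma\|_{L^1}$ by $\gamma\,\|\nabla u\|_{L^p}\,\|u\|_{L^{p^*}}^{\gamma-1}$, while $\||u|^\gamma\|_{L^{n/(n-1)}}=\|u\|_{L^{p^*}}^{\gamma}$ by the choice of $\gamma$. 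Writing $X=\|u\|_{L^{p^*}}$ this gives
$$ X^{\gamma}\le C\gamma\,\|\nabla u\|_{L^p}\,X^{\gamma-1}+C\int_M|u|^{\gamma}\,dv_{g}. $$
Because $p\le\gamma<p^*$, the last integral is controlled by interpolation, $\int_M|u|^\gamma\,dv_{g}\le\|u\|_{L^p}^{(1-\theta)\gamma}\,X^{\theta\gamma}$ with $\theta\in[0,1)$ fixed by $\frac1\gamma=\frac{1-\theta}{p}+\frac{\theta}{p^*}$. Two applications of Young's inequality — one with exponents $\frac{\gamma}{\gamma-1},\gamma$ and one with weights $\theta,1-\theta$ — then let me absorb the powers of $X$ into the left-hand side (here it is essential that $\gamma-1<\gamma$ and $\theta<1$), leaving $X^{\gamma}\le C\big(\|\nabla u\|_{L^p}^{\gamma}+\|u\|_{L^p}^{\gamma}\big)$, i.e. $\|u\|_{L^{p^*}}\le C\,\|u\|_{L^p_1}$.

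The interpolation step is then routine: for $p\le q\le p^*$ choose $\mu\in[0,1]$ with $\frac1q=\frac{1-\mu}{p}+\frac{\mu}{p^*}$, so that $\|u\|_{L^q}\le\|u\|_{L^p}^{1-\mu}\|u\|_{L^{p^*}}^{\mu}\le\|u\|_{L^p_1}^{1-\mu}\big(C\|u\|_{L^p_1}\big)^{\mu}=C^{\mu}\|u\|_{L^p_1}$, which is exactly the claimed embedding $L^p_1(M)\hookrightarrow L^q(M)$.

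The delicate point — and the one I expect to be the main obstacle — is the legitimacy of substituting $v=|u|^{\gamma}$ into the $L^1$-Sobolev inequality: a priori one does not know that $|u|^{\gamma}\in L^1_1(M)$, nor that the chain rule $\nabla|u|^{\gamma}=\gamma|u|^{\gamma-1}\nabla|u|$ holds weakly. The standard remedy is to carry out the computation first for $u\in C^\infty(M)\cap L^p_1(M)$ with $|u|$ replaced by the bounded truncation $\min(|u|,L)$ (so that every integral in sight is finite and the chain rule is elementary), then send $L\to\infty$ via monotone convergence and Fatou's lemma, and finally pass to arbitrary $u\in L^p_1(M)$ using that this space is by definition the completion of such smooth functions. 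I would also double-check at the very beginning that $\gamma=\frac{(n-1)p}{n-p}$ really does lie in $[p,p^*)$ when $1<p<n$, since every interpolation inequality used above depends on it.
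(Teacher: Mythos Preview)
Your argument is correct and is precisely the classical Gagliardo--Nirenberg--Sobolev bootstrap: apply the $L^1_1\hookrightarrow L^{n/(n-1)}$ inequality to $|u|^{\gamma}$ with $\gamma=\frac{(n-1)p}{n-p}$, use H\"older and Young to reach the endpoint $L^{p^*}$, and interpolate for intermediate $q$. The identities $\frac{\gamma n}{n-1}=p^*$ and $(\gamma-1)\frac{p}{p-1}=p^*$ are checked correctly, and your caveat about justifying the substitution $v=|u|^{\gamma}$ via truncation and density is exactly the right technical point to flag.

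There is, however, nothing in the paper to compare your proof against: this proposition appears in Section~2 (``Framework Space: Notations and Basic Properties'') as a quoted background result, stated without proof and attributed implicitly to the references \cite{aubin1982nonlinear, hebey2000nonlinear}. The paper never supplies an argument of its own. Your write-up is in fact the standard proof one finds in those very sources (e.g.\ Hebey, \emph{Nonlinear Analysis on Manifolds}, Lemma~2.1 and the discussion leading to Theorem~2.6), so in that sense you have reconstructed what the cited literature does rather than diverged from it.
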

\begin{proposition}\label{main}
Assume that the complete n-manifold $ ( M, g ) $ has property $ B_{vol} ( \lambda, v ) $ for some $ ( \lambda, v ).$ Then there exist positive constants $ \delta_{0} = \delta_{0} ( n, \, \lambda, \, v ) $ and $ A = A ( n, \, \lambda, \, v ) $, we have, if $ R \leq \delta_{0} $, if $ x \in M $ if $ 1 \leq p \leq n $, and if $ u \in L^{p}_{1,0} ( \, B_{R} ( x ) \, ) $ the estimate 
$$ || \, u \, ||_{L^{q}} \leq A_{q} \, || \, \nabla u \, ||_{L^{p}},$$ where $ \frac{1}{q} = \frac{1}{p} - \frac{1}{n}.$ 
\end{proposition}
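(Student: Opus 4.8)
The plan is to reduce the statement to the single borderline case $p=1$, $q=\tfrac{n}{n-1}$, and then to recover every exponent $1\le p<n$ by the interpolation device that already underlies the preceding proposition, carried out now on the fixed ball $B_{R}(x)$ rather than on all of $M$. The first task is to turn the hypothesis $B_{vol}(\lambda,v)$ into quantitative control of small balls. Since $Rc(g)\ge\lambda(n-1)g$, Bishop--Gromov comparison says that $r\mapsto |B_{r}(y)|_{g}/V_{\lambda}(r)$ is non-increasing for every $y\in M$, where $V_{\lambda}(r)$ denotes the volume of the $r$-ball in the space form of constant curvature $\lambda$. This gives at once the upper bound $|B_{r}(y)|_{g}\le V_{\lambda}(r)$; comparing instead with the scale $r=1$ and using $|B_{1}(y)|_{g}\ge v$ gives the matching lower bound $|B_{r}(y)|_{g}\ge \tfrac{v}{V_{\lambda}(1)}\,V_{\lambda}(r)$ for $r\le 1$. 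Choosing $\delta_{0}=\delta_{0}(n,\lambda,v)$ small enough that $V_{\lambda}(r)$ is comparable to $r^{n}$ on $(0,\delta_{0}]$ (which forces $\delta_{0}<\pi/\sqrt{\lambda}$ when $\lambda>0$), one obtains a two-sided Euclidean-type estimate $c(n,\lambda,v)\,r^{n}\le|B_{r}(y)|_{g}\le C(n,\lambda,v)\,r^{n}$ valid for all $y\in M$ and all $r\le\delta_{0}$.

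\textbf{From volumes to the $L^{1}$-Sobolev inequality.} The heart of the argument is to promote these volume bounds to a \emph{uniform} isoperimetric inequality on balls of radius $\le\delta_{0}$: there is $A_{1}=A_{1}(n,\lambda,v)$ such that $|\partial\Omega|_{g}\ge A_{1}^{-1}\,|\Omega|_{g}^{(n-1)/n}$ for every smooth $\Omega\Subset B_{R}(x)$ with $R\le\delta_{0}$. One route is the Federer--Fleming/Michael--Simon scheme (or the B\'erard--Meyer, resp.\ Croke, comparison), which converts a two-sided bound $c\,r^{n}\le|B_{r}(y)|_{g}\le C\,r^{n}$ on small balls into such an isoperimetric inequality with constant depending only on $n,c,C$, hence only on $n,\lambda,v$. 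Applying the coarea formula to the level sets of $|u|$ and integrating then produces $\|u\|_{L^{n/(n-1)}}\le A_{1}\,\|\nabla u\|_{L^{1}}$ for every $u\in L^{1}_{1,0}(B_{R}(x))$, with $A_{1}$ independent of $x$ and $R$.

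\textbf{Interpolation to $1\le p<n$.} Fix such a $p$ and $q$ with $\tfrac1q=\tfrac1p-\tfrac1n$, let $u\in L^{p}_{1,0}(B_{R}(x))$, and (after a routine truncation/approximation reduction) assume $\|u\|_{L^{q}}<\infty$. Apply the previous step to $v=|u|^{\kappa}$ with $\kappa=\tfrac{(n-1)p}{n-p}>1$. Since $|\nabla v|=\kappa\,|u|^{\kappa-1}|\nabla u|$ a.e., Hölder's inequality with exponents $p'=\tfrac{p}{p-1}$ and $p$ gives $\|\nabla v\|_{L^{1}}\le\kappa\,\big\||u|^{\kappa-1}\big\|_{L^{p'}}\|\nabla u\|_{L^{p}}$, and the elementary identities $(\kappa-1)p'=\kappa\tfrac{n}{n-1}=q$ give $\big\||u|^{\kappa-1}\big\|_{L^{p'}}=\|u\|_{L^{q}}^{\kappa-1}$ and $\|v\|_{L^{n/(n-1)}}=\|u\|_{L^{q}}^{\kappa}$. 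Combining these with the $L^{1}$-Sobolev inequality yields $\|u\|_{L^{q}}^{\kappa}\le\kappa A_{1}\,\|u\|_{L^{q}}^{\kappa-1}\|\nabla u\|_{L^{p}}$, that is, $\|u\|_{L^{q}}\le A_{q}\,\|\nabla u\|_{L^{p}}$ with $A_{q}=\kappa A_{1}$ depending only on $n,\lambda,v$. This is exactly the mechanism of the preceding proposition; note that completeness of the ambient manifold plays no role here since every function involved is compactly supported in $B_{R}(x)$, so only the $L^{1}$ inequality is needed.

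\textbf{Main obstacle.} Everything after the $L^{1}$-Sobolev inequality is soft. The delicate step is the passage ``volume bounds $\Rightarrow$ uniform isoperimetric inequality'': one must extract from a lower Ricci bound together with non-collapsing a genuinely uniform isoperimetric (equivalently $L^{1}$-Sobolev) constant, together with an explicit safe radius $\delta_{0}(n,\lambda,v)$ below which all balls behave uniformly like Euclidean balls, and then transfer the inequality from balls to arbitrary subdomains of $B_{R}(x)$.
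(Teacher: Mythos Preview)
The paper does not prove this proposition at all: it is one of the preliminary results quoted in Section~2 from the references (Aubin, Hebey, etc.), stated without proof and used later only through its consequence in the next proposition. There is therefore no ``paper's own proof'' to compare against.

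That said, your sketch follows exactly the standard route one finds in Hebey's monograph: Bishop--Gromov gives two-sided Euclidean-type volume control on small balls, a comparison theorem (Croke's inequality is the usual tool here, via a lower bound on the injectivity radius or directly via the visual-angle argument) upgrades this to a local isoperimetric inequality with constant depending only on $(n,\lambda,v)$, coarea turns that into the $L^{1}$--$L^{n/(n-1)}$ Sobolev inequality on $B_{R}(x)$, and the power-substitution $v=|u|^{\kappa}$ plus H\"older interpolates to all $1\le p<n$. You are right to flag the second step as the only nontrivial one; the rest is routine. If you want this to stand as a self-contained proof rather than an outline, you should either cite Croke's isoperimetric inequality explicitly or spell out how the doubling/non-collapsing bounds alone yield the isoperimetric constant, since ``Federer--Fleming/Michael--Simon scheme'' by itself does not obviously produce a constant depending only on $(n,\lambda,v)$ without that intermediate geometric input.
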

\begin{proposition}\label{main}
Assume that for some $ ( \lambda, v ) $ the complete n-manifold $( M, g ) $ has property $ B_{vol} ( \lambda, v ) $. Let $ q \in \mathcal{P} ( M ) $ be uniformly continuous with $ p^{+} < n.$ Then $ L^{p( . )}_{1} ( M ) \hookrightarrow L^{q( . )} ( M ) \,\, \forall p \in \mathcal{P} ( M ) $ such that $ p \ll q \ll p* = \frac{n p}{n - p}. $ In fact, for $ || \, u \, ||_{L^{p( . )}_{1}} $ sufficiently small we have the estimate $$ \varrho_{q( . )} ( u ) \leq G \, ( \, \varrho_{p( . )} ( u ) + \varrho_{p( . )} ( | \, \nabla u \, | ) \, ), $$ where $G$ is a positive constant depend on $ n, \, \lambda, \, v, \, p $ and $ q $.
\end{proposition}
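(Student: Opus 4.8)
The plan is to reduce the global variable-exponent inequality to a family of local constant-exponent Sobolev inequalities on small geodesic balls, and then to recombine them using the bounded geometry. First I would fix the covering. Since $(M,g)$ has property $B_{vol}(\lambda,v)$, Bishop--Gromov volume comparison together with a packing argument yields, for every $\delta\le\delta_{0}$ (with $\delta_{0}=\delta_{0}(n,\lambda,v)$ the radius from the preceding proposition), a countable covering $\{B_{\delta/2}(x_{i})\}$ of $M$ whose dilations $\{B_{\delta}(x_{i})\}$ have multiplicity at most $N=N(n,\lambda,v)$, together with a subordinate partition of unity $\{\alpha_{i}\}$ and cut-offs $\eta_{i}$ satisfying $\eta_{i}\equiv1$ on $B_{\delta/2}(x_{i})$, $\operatorname{supp}\eta_{i}\subset B_{\delta}(x_{i})$, $0\le\eta_{i}\le1$, $|\nabla\eta_{i}|\le C\delta^{-1}$; one also arranges $\tfrac12\delta_{jk}\le g_{jk}\le2\delta_{jk}$ in the charts (so the chart-comparison proposition applies) and, by Bishop--Gromov again, $|B_{\delta}(x_{i})|_{g}\le V_{0}(n,\lambda)$. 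Shrinking $\delta$ once and for all, using the (uniform) continuity of the exponents involved together with the strict relations $p\ll q\ll p^{*}$, I arrange that on each ball one has $p_{i}^{+}<q_{i}^{-}\le q_{i}^{+}<(p_{i}^{-})^{*}$, where $p_{i}^{\pm},q_{i}^{\pm}$ denote the extrema of $p,q$ over $B_{\delta}(x_{i})$.

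Next comes the local step. Since $\sum_{i}\alpha_{i}=1$, $\alpha_{i}\le\eta_{i}$ and $\eta_{i}\equiv1$ on $B_{\delta/2}(x_{i})$, one has $\varrho_{q(\cdot)}(u)\le\sum_{i}\varrho_{q(\cdot),B_{\delta}(x_{i})}(\eta_{i}u)$. Fix $i$ and put $v=\eta_{i}u$, which has compact support in $B_{\delta}(x_{i})$. Splitting the integral according to whether $|v|\le1$ or $|v|>1$ gives $\varrho_{q(\cdot),B_{\delta}(x_{i})}(v)\le\|v\|_{L^{q_{i}^{-}}(B_{\delta}(x_{i}))}^{q_{i}^{-}}+\|v\|_{L^{q_{i}^{+}}(B_{\delta}(x_{i}))}^{q_{i}^{+}}$. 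Because $p_{i}^{-}\le q_{i}^{\pm}\le(p_{i}^{-})^{*}$ and $v$ has compact support, the constant-exponent Sobolev estimates recorded above (which already hold on small geodesic balls of $M$ with uniform constants), composed with Hölder's inequality on the finite-volume ball, bound $\|v\|_{L^{q_{i}^{\pm}}(B_{\delta}(x_{i}))}$ by $C\big(\|v\|_{L^{p_{i}^{-}}(B_{\delta}(x_{i}))}+\|\nabla v\|_{L^{p_{i}^{-}}(B_{\delta}(x_{i}))}\big)$, with $C=C(n,\lambda,v,p,q)$ independent of $i$ since $p_{i}^{-}$ and $q_{i}^{\pm}$ stay in compact sets.

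The step I expect to be the main obstacle is to convert these constant-exponent quantities back into variable-exponent modulars with a constant depending neither on $u$ nor on $i$. Here the hypothesis that $\|u\|_{L^{p(\cdot)}_{1}(M)}$ be sufficiently small is decisive: choosing it below a threshold $\varepsilon_{0}=\varepsilon_{0}(n,\lambda,v,p,q)$ (which also compensates for the factor $|\nabla\eta_{i}|\le C\delta^{-1}$) forces $\|\eta_{i}u\|_{L^{p(\cdot)}(B_{\delta}(x_{i}))}\le1$ and $\|\nabla(\eta_{i}u)\|_{L^{p(\cdot)}(B_{\delta}(x_{i}))}\le1$ for every $i$, since passing to a subdomain does not increase the Luxemburg norm. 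On the finite-measure ball the embedding $L^{p(\cdot)}(B_{\delta}(x_{i}))\hookrightarrow L^{p_{i}^{-}}(B_{\delta}(x_{i}))$ controls the right-hand side by powers of $\|v\|_{L^{p(\cdot)}(B_{\delta}(x_{i}))}$ and $\|\nabla v\|_{L^{p(\cdot)}(B_{\delta}(x_{i}))}$; as these norms are $\le1$ and $q_{i}^{\pm}>p_{i}^{+}$, the unit-ball property $\|w\|_{L^{p(\cdot)}(B_{\delta}(x_{i}))}^{p_{i}^{+}}\le\varrho_{p(\cdot),B_{\delta}(x_{i})}(w)$ lets me absorb the surplus powers and obtain $\varrho_{q(\cdot),B_{\delta}(x_{i})}(\eta_{i}u)\le C_{0}\big(\varrho_{p(\cdot),B_{\delta}(x_{i})}(\eta_{i}u)+\varrho_{p(\cdot),B_{\delta}(x_{i})}(|\nabla(\eta_{i}u)|)\big)$.

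Finally I would reassemble. Using $0\le\eta_{i}\le1$, $|\nabla\eta_{i}|\le C\delta^{-1}$ and $(a+b)^{p(x)}\le2^{p^{+}-1}(a^{p(x)}+b^{p(x)})$, the right-hand side above is dominated by $C_{1}\int_{B_{\delta}(x_{i})}\big(|u|^{p(x)}+|\nabla u|^{p(x)}\big)\,dv_{g}$ with $C_{1}=C_{1}(n,\lambda,v,p,q)$; summing over $i$ and invoking the bound $N$ on the multiplicity of $\{B_{\delta}(x_{i})\}$ yields $\varrho_{q(\cdot)}(u)\le G\big(\varrho_{p(\cdot)}(u)+\varrho_{p(\cdot)}(|\nabla u|)\big)$ with $G=C_{1}N$, which is the claimed modular estimate. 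The continuous embedding $L^{p(\cdot)}_{1}(M)\hookrightarrow L^{q(\cdot)}(M)$ then follows from this estimate by the standard homogeneity/normalization argument relating the modular and the Luxemburg norm. The crux throughout is keeping every constant uniform over the (possibly infinite) covering, and this is exactly what property $B_{vol}(\lambda,v)$ supplies through the uniform volume bounds on small balls and the uniform covering multiplicity, while the smallness assumption on $\|u\|_{L^{p(\cdot)}_{1}}$ is precisely what makes the modular-to-norm passages close up.
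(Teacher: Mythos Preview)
The paper does not actually prove this proposition: it sits in the preliminary section among results quoted from the literature, with no proof supplied. So there is nothing in the paper to compare your argument against line by line.

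That said, your sketch is the standard route (essentially the Gaczkowski--G\'orka--Pons argument) and is sound as outlined: a bounded-multiplicity covering of $M$ by small geodesic balls furnished by the $B_{vol}(\lambda,v)$ hypothesis, freezing the exponents on each ball using the uniform continuity of $q$ together with the strict chain $p\ll q\ll p^{*}$ so that $p_{i}^{+}<q_{i}^{-}\le q_{i}^{+}<(p_{i}^{-})^{*}$, applying the constant-exponent local Sobolev inequality from the preceding proposition with uniform constants, and then summing via the multiplicity bound. Your identification of the delicate point is correct: the smallness of $\|u\|_{L^{p(\cdot)}_{1}}$ is exactly what forces the local Luxemburg norms below $1$, so that the inequality $\|w\|_{L^{p(\cdot)}(B)}^{p_{i}^{+}}\le\varrho_{p(\cdot),B}(w)$ goes the right way and the surplus powers (coming from $q_{i}^{\pm}>p_{i}^{+}$) can be absorbed. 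One small cosmetic remark: when you write ``$q_{i}^{\pm}>p_{i}^{+}$'' in that absorption step, what you actually need is that the overall exponent landing on $\|w\|_{L^{p(\cdot)}(B)}$ after the constant-exponent Sobolev and H\"older steps is at least $p_{i}^{+}$; this follows from $q_{i}^{-}>p_{i}^{+}$, which you have already arranged by shrinking $\delta$.
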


\section{Existence of Non-trivial Solution}
\begin{definition}
$ u \in L^{p( . )}_{1} ( M ) $ is said to be a weak solution of the problem \eqref{1.1} if for every $ \phi \in D( M ) $ we have
\begin{align}\label{3.1}
\int_{M} &| \, \nabla u ( x ) \, |^{p( x) - 2} g ( \, \nabla u ( x ), \nabla \phi ( x ) ) \,\, dv_{g} ( x ) + \int_{M} h ( x,\, u( x ), \, \nabla u( x ) )\,.\, \phi ( x ) \,\, dv_{g} ( x ) \nonumber \\&+ \int_{M} | \, u( x ) \, |^{p( x ) - 2} u( x ) \, \phi ( x ) \,\, dv_{g} ( x ) = \int_{M} f( x, u( x ) ) \, \phi ( x ) \,\, dv_{g} ( x )
\end{align}
\end{definition}
\begin{theorem}\label{main}
Suppose that $ ( f_{1} ) - ( f_{3} ) $ and $ ( h_{1} ) $ are true. Then the problem \eqref{1.1} possesses a non-trivial weak solution.
\end{theorem}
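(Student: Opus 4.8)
The plan is to obtain the weak solution as a critical point of the energy functional associated to \eqref{1.1}, using the Mountain Pass Theorem. Define, for $u \in L^{p(.)}_1(M)$,
\[
J(u) = \int_M \frac{1}{p(x)}\bigl(|\nabla u|^{p(x)} + |u|^{p(x)}\bigr)\,dv_g + \int_M H(x,u,\nabla u)\,dv_g - \int_M F(x,u)\,dv_g,
\]
where the middle term is a suitable primitive term coming from $h$ (or, if $h$ is not a gradient, one works instead with the associated operator directly and uses a fixed-point/Galerkin scheme; I will assume the variational structure so that the Mountain Pass Theorem applies as announced). First I would verify that $J$ is well defined and $C^1$ on $X := L^{p(.)}_1(M)$: here $(f_1)$--$(f_3)$ control the growth of $F$ (subcritical, in fact sub-$p(x)$ by $(f_3)$) and $(h_1)$ controls $H$ via $\gamma \in L^1$ and $l \in L^\infty$, while the embedding Proposition of Section 2 ($L^{p(.)}_1(M) \hookrightarrow L^{q(.)}(M)$ for $p \ll q \ll p^*$) guarantees the lower-order integrals are finite and weakly continuous. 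The derivative $J'(u)$ acting on $\phi \in D(M)$ is exactly the left-minus-right side of \eqref{3.1}, so critical points are precisely weak solutions.

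Next I would check the two geometric hypotheses of the Mountain Pass Theorem. For the local minimum at the origin: since $J(0)=0$ (using $(f_1)$, $f(x,0)=0$ so $F(x,0)=0$), and since by $(f_3)$ for every $\varepsilon>0$ there is $C_\varepsilon$ with $|F(x,\alpha)| \le \varepsilon |\alpha|^{p(x)} + C_\varepsilon |\alpha|^{q(x)}$ with $p \ll q \ll p^*$, one estimates $J(u) \ge c_1 \|u\|^{p^+}_X - \varepsilon c_2 \|u\|^{p^-}_X - C_\varepsilon c_3 \|u\|^{q^-}_X$ for $\|u\|_X$ small (using the relation between modular and norm, and absorbing the $h$-term which is lower order by $\gamma \in L^1$); choosing $\varepsilon$ small gives $J(u) \ge \rho_0 > 0$ on a small sphere $\|u\|_X = r$. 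For the far point: pick any $u_0 \in D(M)$ with $\int_M F(x,u_0)\,dv_g > 0$ (possible by $(f_2)$, which forces $\int_M F(x,\alpha)\,dv_g>0$), and show $J(tu_0) \to -\infty$ as $t \to +\infty$; this uses $(f_2)$ in the Ambrosetti--Rabinowitz spirit: $\int_M F(x,tu_0)\,dv_g$ grows faster than the $p(x)$-power terms because $(f_2)$ gives a lower bound $F(x,\alpha) \ge c|\alpha|^\beta$-type growth after integrating the differential inequality $\beta F \le f\cdot\alpha$, with $\beta \le p^+$ — here one must be slightly careful and may need $\beta$ strictly controlling the top exponent, but the hypothesis as stated with Example 2.1 ($\beta < K(x) < p(x)$) is consistent with superlinearity of $F$ relative to the relevant power.

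Then I would establish the Palais--Smale condition. Let $(u_n) \subset X$ satisfy $J(u_n) \to c$, $J'(u_n) \to 0$. Boundedness: combine $J(u_n) - \frac{1}{\beta}\langle J'(u_n), u_n\rangle$; the $p(x)$-terms yield $(\frac{1}{p^+} - \frac{1}{\beta})$-type positive coefficients times the modular, the $F$-terms cancel favorably by $(f_2)$, and the $h$-terms are absorbed using $(h_1)$ with $l \in L^\infty(M)$ and $\gamma \in L^1(M)$; this bounds $\varrho_{p(.)}(|\nabla u_n|) + \varrho_{p(.)}(u_n)$, hence $\|u_n\|_X$. Up to a subsequence $u_n \rightharpoonup u$ in $X$; by the compactness in the embedding Proposition one gets $u_n \to u$ strongly in $L^{q(.)}(M)$, so the lower-order terms converge. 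The crucial strong-convergence step is to show $u_n \to u$ in $X$: test $J'(u_n) - J'(u)$ against $u_n - u$, and use the standard $(S_+)$-property of the $p(x)$-Laplacian plus the $|u|^{p(x)-2}u$ term together with the now-controlled remainder terms.

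The main obstacle I anticipate is precisely handling the gradient-dependent lower-order term $h(x,u,\nabla u)$: because $h$ depends on $\nabla u$ with a full $|\nabla u|^{p(x)}$ growth (natural growth), it is \emph{not} compact, the functional $u \mapsto \int_M H(x,u,\nabla u)\,dv_g$ may fail to be well-defined or $C^1$ on all of $X$, and in the Palais--Smale argument the term $\int_M (h(x,u_n,\nabla u_n) - h(x,u,\nabla u))(u_n-u)\,dv_g$ does not obviously vanish. The remedy is to exploit $l \in L^\infty(M)$ to get a uniform bound $|h(x,s,\xi)| \le \gamma(x) + \|l\|_\infty |\xi|^{p(x)}$, truncate if necessary, and use the almost-everywhere convergence of $\nabla u_n$ (obtained via a Boccardo--Murat type argument on balls $B_R(x)$ combined with the local estimates of the Section 2 Propositions) together with Vitali's theorem and the equi-integrability furnished by $\gamma \in L^1(M)$; this is the delicate technical heart of the proof and where the noncompactness of $M$ also enters, requiring the global modular estimate $\varrho_{q(.)}(u) \le G(\varrho_{p(.)}(u) + \varrho_{p(.)}(|\nabla u|))$ to tame behaviour at infinity.
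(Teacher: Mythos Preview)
Your proposal is correct and follows essentially the same route as the paper: define the energy functional (the paper calls it $A$), verify the Mountain Pass geometry using $(f_1)$--$(f_3)$ and $(h_1)$, establish boundedness of Palais--Smale sequences via the combination $A(u_n)-\tfrac{1}{\beta}\langle DA(u_n),u_n\rangle$, and then upgrade to strong convergence. The only notable difference is in the last step, where the paper carries out the $(S_+)$-type argument explicitly by splitting into the regions $\{p(x)<2\}$ and $\{p(x)\ge 2\}$ and using the standard vector inequalities for $|\zeta-\mu|^p$, whereas you invoke the $(S_+)$ property abstractly and sketch a Boccardo--Murat/Vitali route for the natural-growth term~$h$; your treatment is in fact more cautious about that term than the paper's.
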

To access our main premises, the ones shown in the first Theorem, we have to initially demonstrate few lemmas related to the mountain pass Theorem and Palais-Smale condition. Considering the functional
\begin{align*}
A( u ) = & \int_{M} \frac{1}{p( x )} ( \, | \, \nabla u( x ) \, |^{p( x )} + | \, u( x ) \, |^{p( x )} \, ) \,\, dv_{g} ( x ) + \int_{M} H ( x, \,u( x ), \, \nabla u( x ) ) \,\, dv_{g} ( x )\\&  - \int_{M} F( x, \, u( x ) ) \,\, dv_{g} ( x ),
\end{align*}
with $\displaystyle H ( x,\, u( x), \, \nabla u( x) ) = \int_{0}^{\alpha} h ( x, \,t, \, \nabla t ) \,\, dt  $ being the primitive of $ h( x, \, \alpha, \, \nabla \alpha ) $. And $ H( x, 0, 0 ) = 0 $.\\
It's follow from \eqref{3.1}  and the hypothesis of $ ( f_{1} ) - ( f_{3} ), \, ( h_{1} )$ and the above definition of $H$, that $A$ is $ C^{1} $ functional.\\
Let $ D A = D B - D C $ the differential of $ A = B - C $ with 
\begin{align*}
DB =& \int_{M}  \,|\, \nabla u ( x ) \, |^{p( x) - 2} g ( \, \nabla u ( x ), \nabla \phi ( x ) ) \,\, dv_{g} ( x ) \\& + \int_{M} h ( x,\, u( x ), \, \nabla u( x ) ) \,.\, \phi ( x )\,\, dv_{g} ( x ) \\& + \int_{M} | \, u( x ) \, |^{p( x ) - 2} u( x ) \, \phi ( x ) \,\, dv_{g} ( x ) ,
\end{align*} 
 and $$ DC = \int_{M} f( x, u( x ) ) \, \phi ( x ) \,\, dv_{g} ( x ). $$
\begin{lemma}\label{main}
The functional $A$ satisfies mountain pass geometry in the sense that: \\
$ i/ \,\, A( 0 ) = 0. $\\
$ ii/ \,$ There exists $ r, \eta > 0 $ such that $ A( u ) \geq \eta $ if $ || \, u \, || > r.$\\
$ iii/ \, $ There exists $ u, \, || \, u \, || > r $ such that $ A( u ) \leq 0 .$
\end{lemma}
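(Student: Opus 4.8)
The three items are in increasing order of difficulty: $i/$ is immediate, while $ii/$ and $iii/$ carry the content. I would dispose of $i/$ at once: evaluating $A$ at $u=0$, the first integral vanishes, $F(x,0)=\int_{0}^{0}f(x,t)\,dt=0$ by $(f_{1})$, and $H(x,0,0)=0$ by hypothesis, so $A(0)=0$.

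For $ii/$ I would fix a small radius $r\in(0,1)$ and bound $A$ from below on the sphere $\|u\|_{L^{p(.)}_{1}}=r$ (the estimate the mountain pass geometry actually requires). For the positive principal part I would use the standard modular--norm comparison in $L^{p(.)}_{1}(M)$: for $\|u\|\le 1$,
$$\int_{M}\frac{1}{p(x)}\bigl(|\nabla u|^{p(x)}+|u|^{p(x)}\bigr)\,dv_{g}\ \ge\ \frac{1}{p^{+}}\bigl(\varrho_{p(.)}(|\nabla u|)+\varrho_{p(.)}(u)\bigr)\ \ge\ \frac{c_{0}}{p^{+}}\,\|u\|^{p^{+}}.$$
The contribution of $H$ I would estimate through $(h_{1})$, exploiting that $l$ is increasing with $l\in L^{\infty}(M)$, that $\gamma\in L^{1}(M)$, and that $H(x,0,0)=0$, so as to get $\bigl|\int_{M}H(x,u,\nabla u)\,dv_{g}\bigr|=o(r^{p^{+}})$ as $r\to 0$. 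For the contribution of $F$, the growth hypotheses $(f_{1})$--$(f_{3})$ yield, for each $\varepsilon>0$, a constant $C_{\varepsilon}$ with $\int_{M}F(x,u)\,dv_{g}\le\varepsilon\,\varrho_{p(.)}(u)+C_{\varepsilon}\,\varrho_{q(.)}(u)$ for an auxiliary exponent $q$ with $p\ll q\ll p^{*}$, and the Sobolev embedding $L^{p(.)}_{1}(M)\hookrightarrow L^{q(.)}(M)$ of the last Proposition converts this into $\varepsilon\,\|u\|^{p^{+}}+C'_{\varepsilon}\,\|u\|^{q^{-}}$, a quantity of higher order in $r$. Combining the three bounds gives $A(u)\ge(\frac{c_{0}}{p^{+}}-\varepsilon)\,r^{p^{+}}-o(r^{p^{+}})-C'_{\varepsilon}\,r^{q^{-}}$, and choosing $\varepsilon$ and then $r$ small enough produces $\eta>0$ with $A(u)\ge\eta$.

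For $iii/$ I would fix a nonzero $w\in D(M)$ and examine $A$ along the ray $u=tw$, $t\to+\infty$. For $t\ge 1$ the principal part is at most $\frac{t^{p^{+}}}{p^{-}}\bigl(\varrho_{p(.)}(|\nabla w|)+\varrho_{p(.)}(w)\bigr)$, and the $H$--contribution is again estimated by means of $(h_{1})$. On the other hand $(f_{2})$ is an Ambrosetti--Rabinowitz--type condition: the inequality $\beta F(x,\alpha)\le\alpha\,f(x,\alpha)$ it encodes, once integrated in $\alpha$, forces $F(x,\cdot)$ to grow — in the directions where it is positive — like a power of its argument strictly above $p^{+}$, so that $\int_{M}F(x,tw)\,dv_{g}$ outpaces $t^{p^{+}}$ for $t$ large. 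Hence $A(tw)\le C_{1}t^{p^{+}}-C_{2}t^{\mu}+C_{3}\to-\infty$ with $\mu>p^{+}$, and any $u=t_{0}w$ with $t_{0}$ large has $\|u\|>r$ and $A(u)\le 0$.

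The main obstacle is $iii/$ and the exponent bookkeeping it shares with $ii/$: one must draw from $(f_{2})$ a genuine super-$p^{+}$ growth of the primitive $F$ and match it with the $p(x)$--growth of the principal part on a non-compact manifold whose volume may be infinite, keeping every integral over $M$ finite; in $ii/$ the parallel point is choosing the auxiliary exponent $q$ so that the embedding of the last Proposition applies while still $q^{-}>p^{+}$. Controlling the $H$--term in both items is the other delicate ingredient, since $(h_{1})$ only bounds $|h|$ and does not grant $H\ge 0$.
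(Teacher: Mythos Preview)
Your treatment of $i/$ matches the paper's. Your $ii/$ is in the same spirit but organized differently: the paper does not introduce an auxiliary exponent $q$ with $q^{-}>p^{+}$ and a higher--order remainder. Instead it bounds $\int_{M}F(x,u)\,dv_{g}\le\frac{\epsilon+m(\epsilon)}{\beta}\,\varrho_{p(.)}(u)$ directly from $(f_{2})$--$(f_{3})$ and $\bigl|\int_{M}H\,dv_{g}\bigr|\le c_{1}\varrho_{p(.)}(u)+c_{2}+l(u)\,\varrho_{p(.)}(|\nabla u|)$ from $(h_{1})$, subtracts these from $\frac{1}{p^{+}}(\varrho_{p(.)}(|\nabla u|)+\varrho_{p(.)}(u))$, and declares the resulting coefficients $K_{1},K_{2}$ positive. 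Your version with the Sobolev embedding and the $r^{q^{-}}$ remainder is the more standard mountain--pass bookkeeping; the paper's is more direct but leans on smallness of the constants.

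Your argument for $iii/$, however, contains a genuine gap. You read $(f_{2})$ as an Ambrosetti--Rabinowitz condition that forces $F(x,\cdot)$ to grow like a power \emph{strictly above} $p^{+}$, and then write $A(tw)\le C_{1}t^{p^{+}}-C_{2}t^{\mu}+C_{3}$ with $\mu>p^{+}$. But $(f_{2})$ explicitly takes $\beta\in[p^{-},p^{+}]$, so $\beta\le p^{+}$; integrating $\beta F\le \alpha f$ yields at best $F(x,\alpha)\gtrsim|\alpha|^{\beta}$ with $\beta\le p^{+}$, which cannot outpace the principal part. Worse, $(f_{3})$ imposes $f(x,\alpha)=o(|\alpha|^{p(x)-1})$ uniformly, hence $F(x,\alpha)=o(|\alpha|^{p(x)})$ and $\int_{M}F(x,tw)\,dv_{g}=o(t^{p^{+}})$ --- precisely the opposite of the super--growth your step requires. (The paper's own Example, $f(x,\alpha)=c|\alpha|^{K(x)-1}$ with $K(x)<p(x)$, makes this explicit.) The paper's route for $iii/$ is therefore quite different from yours: it discards the $-\int_{M}F$ contribution altogether (using $F>0$), bounds the $H$--term via $(h_{1})$, and extracts the divergence $A(tu)\to-\infty$ from the gap $p^{+}>p^{-}$ in the exponents of the remaining terms after dividing by $t^{p^{+}}$. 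Whatever one thinks of that computation, the mechanism is the exponent gap in the principal and perturbation parts, not super--$p^{+}$ growth of the nonlinearity; your proposed source of negativity is unavailable under the stated hypotheses.
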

\begin{proof}
$ i/ \, \, A( 0 ) = 0 $ is obvious. \\
$ ii/ \,$ we need the assumptions $ ( f_{2} ), \, ( f_{3} ) $ and $ ( h_{1} ) $, then we obtain
\begin{align*}
\int_{M} F( x, u( x ) ) \,\, dv_{g} ( x )  & \leq \frac{\epsilon}{\beta} \, \int_{M} | \, u( x ) \, |^{p( x )} \,\, dv_{g} ( x ) + \frac{m( \epsilon )}{\beta} \\& \leq \big( \, \frac{ \epsilon + m ( \epsilon )}{\beta} \, \big)  \, \varrho_{p( . )} ( u ),
\end{align*}
and 
\begin{align*}
\bigg| \, \int_{M} H ( x, \, u( x ), \, \nabla u( x ) )  \,\, dv_{g} ( x ) \, \bigg|  &\leq  \int_{M} \bigg( \, \int_{0}^{u} | \, h( x, \, t, \, \nabla t ) \, | \,\, dt \, \bigg) \,\, dv_{g} ( x )  \\ &\leq c_{1} \, \varrho_{p( . )} ( u ) + c_{2} + l( u ) \, \varrho_{p( . )} ( \, | \, \nabla u \, | \, ). 
\end{align*}
Choose $ || \, u \, || = r $ sufficiently small, so that $ \varrho_{p( . )} ( u ) \leq || \, u \, ||_{p( x )}^{p} $ since $ || \, u \, || = r < 1. $ Now using the Poincaré inequality, we get
\begin{align*}
A( u ) = & \int_{M} \frac{1}{p( x )} ( \, | \, \nabla u( x ) \, |^{p( x )} + | \, u( x ) \, |^{p( x )} \, ) \,\, dv_{g} ( x ) + \int_{M} H ( x, \,u( x ), \, \nabla u( x ) ) \,\, dv_{g} ( x )\\& \hspace*{0.2cm} - \int_{M} F( x, \, u( x ) ) \,\, dv_{g} ( x ) \\ & \geq \frac{1}{p^{+}} \, ( \, \varrho_{p( . ) } ( \, | \, \nabla u \, | \, ) + \varrho_{p( . )} ( u ) \, ) - c_{1} \, \varrho_{p( . )} ( u ) - c_{2}  - l( u )\,.\, \varrho_{p( . )} ( \, | \, \nabla u \, | ) \\& \hspace*{0.2cm} - \big( \, \frac{\epsilon + m( \epsilon ) }{\beta} \, \big)\, \varrho_{p( . )} ( u ) \\ & \geq \bigg( \, \frac{1}{p^{+}} - c_{1} - \frac{\epsilon + m( \epsilon )}{\beta} \, \bigg) \, \varrho_{p( . )} ( u ) + \bigg( \, \frac{1}{p^{+}} - l( u ) \, \bigg) \, \varrho_{p( . )} \, ( \,|\, \nabla u \,|\, ) \\ & \geq K_{1} \, \varrho_{p( . )} ( u ) + k_{2} \, \varrho_{p( . )} ( \, | \, \nabla u \, | \, ) \\ & \geq \inf ( K_{1}, \, K_{2} \, ) \, ( \, \varrho_{p( . ) } \, ( u ) + \varrho_{p( . )} \, ( \, | \, \nabla u \, | \, ),
\end{align*}
with $$ K_{1} = \frac{1}{p^{+}} - c_{1} - \frac{\epsilon + m( \epsilon )}{\beta}, \,\,\,\, K_{2} = \frac{1}{p^{+}} - l( u ), $$ and $$ \varrho_{p( . )} ( \, | \, \nabla u \, | \, ) + \varrho_{p( . )} ( u ) \geq 2^{1 - p^{+}} \, || \, u \, ||_{L^{q( . )} ( M ) }^{q^{+} }.$$
Hence, $$ A( u ) \geq \inf ( K_{1}, \, K_{2} ) \, 2^{1 - p^{+}} \, || \, u \, ||_{L^{q( . )} ( M )}^{q^{+}}. $$ So, $$ A( u ) \geq \eta \,\, \mbox{for some} \,\, \eta > 0 .$$

We can prove $ iii/ $ by using $ ( f_{2} ) $ and $ ( h_{1} ), $ for $ t > 0 $ and $ u \neq 0.$ So,
\begin{align*}
 A( tu ) &=  \int_{M} \frac{1}{p( x )} \, ( \, | \, \nabla t u \, |^{p( x )} - | \, t u \, |^{p( x )} \, ) \,\, dv_{g} ( x )  + \int_{M} H ( x, \, t u, \, \nabla t u ) \,\, dv_{g} ( x ) \\& \hspace*{0.2cm}- \int_{M} F( x, tu ) \,\, dv_{g} ( x ) \\& = \int_{M} \frac{t^{p( x )}}{p( x )} \, ( \, | \, \nabla t u \, |^{p( x )} - | \, t u \, |^{p( x )} \, ) \,\, dv_{g} ( x )  + \int_{M} H ( x, \, t u, \, \nabla t u ) \,\, dv_{g} ( x ) \\& \hspace*{0.2cm} -  \int_{M} F( x, tu ) \,\, dv_{g} ( x ) \\& \leq \int_{M} \frac{t^{p( x )}}{p( x )} \, ( \, | \, \nabla t u \, |^{p( x )} - | \, t u \, |^{p( x )} \, ) \,\, dv_{g} ( x ) + c_{1} \, t^{p( x )} \, \int_{M} | \, t u \, |^{p( x )} \,\, dv_{g} ( x ) + c_{2} \\& \hspace*{0.2cm}+ l( t u ) \,. \, t^{p( x )} \, \int_{M} | \, \nabla t u \, |^{p( x )} \,\, dv_{g} ( x ).
\end{align*}
This implies:
\begin{align}\label{3.2}
 A( t u ) &\leq \int_{M} \frac{t^{p^{+} - p^{-}}}{p( x )} \, ( \, | \, \nabla u \, |^{p( x )} - | \, u \, |^{p( x )} \, ) \,\, dv_{g} ( x ) + c_{1} \,.\, t^{p^{-}} \, \int_{M} | \,  u \, |^{p( x )} \,\, dv_{g} ( x ) + c_{2} \nonumber \\ & \hspace*{0.2cm} + l( t u ) \,.\, t^{p^{+}} \, \int_{M} | \, \nabla u \, |^{p( x )} \,\, dv_{g} ( x ),
\end{align}
 dividing \eqref{3.2} by $ t^{p^{+}} $ and passing the limit $ t \longrightarrow \infty $ we get $ A ( t u ) \longrightarrow - \infty $, since $ p^{+} > p^{-}$. \\
 Hence, $ A( u ) $ satisfies the hypothesis of mountain pass Theorem.
\end{proof}
\begin{lemma}\label{main}
The functional $ A $ satisfies Palais-Smale condition.
\end{lemma}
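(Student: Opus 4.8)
The plan is to follow the classical three-step scheme for functionals with $p(x)$-growth. Let $(u_n)\subset L^{p(\cdot)}_1(M)$ be a Palais-Smale sequence for $A$, so that $|A(u_n)|\le c$ for every $n$ and $A'(u_n)\to 0$ in the dual space $\big(L^{p(\cdot)}_1(M)\big)^{*}$. First I would establish boundedness of $(u_n)$ in $L^{p(\cdot)}_1(M)$: testing $A'(u_n)$ against $u_n$ and forming the combination $A(u_n)-\tfrac1\beta\langle A'(u_n),u_n\rangle$, one uses $(f_2)$ to give a sign to the terms produced by $F$ and $f(x,u_n)u_n$, $(h_1)$ together with $l\in L^{\infty}(M)$ and $\gamma\in L^{1}(M)$ to control the contributions of $H$ and $h(x,u_n,\nabla u_n)u_n$, and the elementary comparison between the modular $\varrho_{p(\cdot)}$ and the norm $\|\cdot\|_{L^{p(\cdot)}_1}$ to reach an inequality of the form $c+\varepsilon_n\|u_n\|_{L^{p(\cdot)}_1}\ge C\big(\varrho_{p(\cdot)}(u_n)+\varrho_{p(\cdot)}(|\nabla u_n|)\big)$ with $C>0$, which forces $\|u_n\|_{L^{p(\cdot)}_1}$ to stay bounded. (Since $(f_3)$ already makes $F$ subcritical with respect to $|u|^{p(x)}$, the functional $A$ is in fact coercive, so the bound $A(u_n)\le c$ alone would also suffice.)

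Next I would pass to a weak limit and dispose of the lower order terms. Under the standing hypothesis $1<p^{-}\le p^{+}<\infty$ the space $L^{p(\cdot)}_1(M)$ is reflexive, so along a subsequence $u_n\rightharpoonup u$ in $L^{p(\cdot)}_1(M)$. Using the Sobolev embedding $L^{p(\cdot)}_1(M)\hookrightarrow L^{q(\cdot)}(M)$ established above one extracts, after a further subsequence, $u_n\to u$ in $L^{q(\cdot)}(M)$, $u_n\to u$ a.e. on $M$, and a fixed element of $L^{q(\cdot)}(M)$ dominating $|u_n|$. From $(f_3)$ one has $|f(x,s)|\le \varepsilon|s|^{p(x)-1}+m(\varepsilon)$, hence $f(\cdot,u_n)$ is bounded in $L^{(p(\cdot))'}(M)$ and, by a Vitali/dominated convergence argument in variable exponent spaces, $\int_M f(x,u_n)(u_n-u)\,dv_g\to 0$; the same reasoning yields $\int_M |u_n|^{p(x)-2}u_n(u_n-u)\,dv_g\to 0$.

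Then comes the $(S_{+})$ step. Since $A'(u_n)\to 0$ and $(u_n-u)$ is bounded in $L^{p(\cdot)}_1(M)$, we have $\langle A'(u_n),u_n-u\rangle\to 0$. Expanding this pairing and subtracting the two limits from the previous step, everything reduces to showing that the natural-growth term $\int_M h(x,u_n,\nabla u_n)(u_n-u)\,dv_g$ tends to $0$. Granting this, one obtains $\limsup_{n}\int_M |\nabla u_n|^{p(x)-2}g\big(\nabla u_n,\nabla(u_n-u)\big)\,dv_g\le 0$, and the $(S_{+})$ property of the $p(x)$-Laplacian on $L^{p(\cdot)}_1(M)$ --- a consequence of the monotonicity inequality $\big(|a|^{p-2}a-|b|^{p-2}b\big)\cdot(a-b)\ge 0$ and of the uniform convexity of the space --- promotes $u_n\rightharpoonup u$ to $u_n\to u$ strongly in $L^{p(\cdot)}_1(M)$, which is exactly the Palais-Smale condition.

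Two points deserve care. The non-compactness of $M$ makes the compactness of the embedding $L^{p(\cdot)}_1(M)\hookrightarrow L^{q(\cdot)}(M)$ --- used tacitly above to pass to a strongly convergent subsequence --- something that must itself be argued, presumably through the bounded-geometry property $B_{vol}(\lambda,v)$. The genuinely hard part, however, is the term $\int_M h(x,u_n,\nabla u_n)(u_n-u)\,dv_g$: by $(h_1)$ its integrand is only dominated by $\gamma(x)+\|l\|_{\infty}|\nabla u_n|^{p(x)}$, so $h(\cdot,u_n,\nabla u_n)$ is merely bounded in $L^{1}(M)$ while $u_n-u$ converges only in $L^{q(\cdot)}(M)$, and plain Hölder estimates do not close the gap. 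I would upgrade the weak convergence of the gradients to a.e. convergence via a Boccardo-Murat type truncation: test $A'(u_n)$ against $\psi(u_n-u)$ with $\psi$ an exponential cut-off chosen so that $\psi'-\|l\|_{\infty}|\psi|\ge\tfrac12$ pointwise, which annihilates the offending gradient term and leaves a quantity controlling $\int_M |\nabla(u_n-u)|^{p(x)}$ on bounded pieces of $M$; exhausting $M$ by the geodesic balls $B_R(x)$ of the local estimate above to deal with the non-compactness, one extracts $\nabla u_n\to\nabla u$ a.e., after which the $L^{1}$-boundedness of $h(\cdot,u_n,\nabla u_n)$ together with the strong $L^{q(\cdot)}$ convergence of $u_n-u$ and a Vitali argument finishes the proof.
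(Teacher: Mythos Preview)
Your proof is sound in outline but follows a genuinely different route from the paper. After the boundedness step (which you and the paper obtain in the same way, via the combination $A(u_n)-\tfrac{1}{\beta}\langle A'(u_n),u_n\rangle$), the paper does \emph{not} pass to a weak limit and invoke the $(S_+)$ property. Instead it argues directly that $(u_n)$ is Cauchy in $L^{p(\cdot)}_1(M)$: it splits the modular according to whether $p(x)<2$ or $p(x)\ge 2$, applies the two classical vector inequalities
\[
|\zeta-\mu|^{p}\le 2^{p}\bigl(|\zeta|^{p-2}\zeta-|\mu|^{p-2}\mu\bigr)\cdot(\zeta-\mu)\quad(p\ge 2),
\]
\[
|\zeta-\mu|^{p}\le \tfrac{2}{p-1}\bigl[(|\zeta|^{p-2}\zeta-|\mu|^{p-2}\mu)\cdot(\zeta-\mu)\bigr]^{p/2}\bigl(|\zeta|+|\mu|\bigr)^{(2p-p^{2})/2}\quad(1\le p\le 2),
\]
and reduces the Cauchy property to showing $\langle u_m-u_n,\,DB(u_m)-DB(u_n)\rangle\to 0$, which in turn is obtained from $DA=DB-DC$, $DA(u_n)\to 0$, and continuity of the Nemytskii operator $N_f$.

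What each approach buys: the paper's Cauchy argument avoids naming the $(S_+)$ property and works symmetrically in $m,n$, but it treats the $h$--term rather implicitly by absorbing $H$ into the functionals $B_1,B_2$; your $(S_+)$ scheme is the cleaner modern template and is more transparent about where the difficulties lie. In particular, the two points you single out---the need for a \emph{compact} (not merely continuous) embedding $L^{p(\cdot)}_1(M)\hookrightarrow L^{q(\cdot)}(M)$ on the non-compact manifold, and the control of $\int_M h(x,u_n,\nabla u_n)(u_n-u)\,dv_g$ when $h$ has natural $|\nabla u|^{p(x)}$ growth---are legitimate and are equally present, though less visibly, in the paper's own argument (it too invokes ``the convergence of $\{u_n\}$ in $L^{p(\cdot)}(M)$'' without proving compactness, and the $H$--terms it carries through the Cauchy estimate are bounded only in $L^{1}$). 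Your proposed Boccardo--Murat exponential test function $\psi$ with $\psi'-\|l\|_\infty|\psi|\ge\tfrac12$ is the standard remedy for natural-growth lower order terms and would close the gap more convincingly than what the paper writes.
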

\begin{proof}
 Let $ \{ \, u_{n} \, \} $ be a Palais-Smale sequence, such as the associated sequence of real numbers $ \{ \, A ( u_{n} )\, \} $ is bounded, and $ DA ( u_{n} ) \longrightarrow 0 $ in $ ( \, L^{p( . )}_{1} ( M ) \, )' $. We will first demonstrate that  $ ( \, u_{n} \, ) $ is bounded in $ L^{p( . )}_{1} ( M ).$ And prove it throughout contradiction.\\
Let $ || \, u_{n} \, ||_{L^{p( . )}_{1} ( M )} \longrightarrow \infty $ as $ n \longrightarrow \infty $. Then, we have 
\begin{align*}
A( u_{n} ) - \frac{1}{\beta} < \, DA ( u_{n} ), \, u_{n} \, > \,& = \big( \, \frac{1}{p( x )} - \frac{1}{\beta} \, \big) \, \int_{M} ( \, | \, \nabla u_{n} \, |^{p( x )} - | \, u_{n} \, |^{p( x )} \, ) \,\, dv_{g} ( x ) \\ & \hspace*{0.2cm}+ \int_{M} \big( \,  H( x, u_{n}, \nabla u_{n} ) - h( x, u_{n}, \nabla u_{n} )  \,.\, \frac{u_{n}}{\beta}\, \big) \,\, dv_{g} ( x ) \\& \hspace*{0.2cm} + \int_{M} \big( \, f( x, u_{n} ) \,.\, \frac{u_{n}}{\beta} - F( x, u_{n} ) \, \big) \,\, dv_{g} ( x ),
\end{align*}
by $ ( f_{2} ) $, we obtain
\begin{align*}
A( u_{n} ) - \frac{1}{\beta} < \, DA ( u_{n} ), \, u_{n} \, > \,& \geq \big( \, \frac{1}{p( x )} - \frac{1}{\beta} \, \big) \, \int_{M} ( \, | \, \nabla u_{n} \, |^{p( x )} - | \, u_{n} \, |^{p( x )} \, ) \,\, dv_{g} ( x ) \\ &+ \int_{M} \big( \,  H( x, u_{n}, \nabla u_{n} ) - h( x, u_{n}, \nabla u_{n} )  \,.\, \frac{u_{n}}{\beta}\, \big) \,\, dv_{g} ( x ) \\& \geq J( u_{n} ) + I( u_{n} ),
\end{align*}
with, $$ J( u_{n} ) = \big( \, \frac{1}{p( x )} - \frac{1}{\beta} \, \big) \, \int_{M} ( \, | \, \nabla u_{n} \, |^{p( x )} - | \, u_{n} \, |^{p( x )} \, ) \,\, dv_{g} ( x ), $$
and $$ I ( u_{n} ) = \int_{M} \big( \,  H( x, u_{n}, \nabla u_{n} ) - h( x, u_{n}, \nabla u_{n} )  \,.\, \frac{u_{n}}{\beta}\, \big) \,\, dv_{g} ( x ). $$
\begin{align*}
J( u_{n} ) \geq \big( \, \frac{1}{p^{+}} - \frac{1}{\beta} \, \big) \, ( \, \varrho_{p( . )} ( u_{n} ) + \varrho_{p( . )} ( \, |\, \nabla u_{n} \, | \, ) \, ),
\end{align*}
or  $ \varrho_{p( . )} ( \, | \, \nabla u_{n} \, | \, ) + \varrho_{p( . )} ( u_{n} ) \geq 2^{1 - p^{+}} \, || \, u_{n} \, ||_{L^{q( . )} ( M )}^{q^{+}}.$ So,
\begin{align*}
J ( u_{n} ) & \geq \big( \, \frac{1}{p^{+}} - \frac{1}{\beta} \, \big) \, 2^{1 - p^{+}} \, ||\, u_{n} \, ||^{q^{+}}_{L^{q( . )} ( M )} \\& \geq \big( \, \frac{1}{p^{+}} - \frac{1}{\beta} \, \big) \, .\, \frac{2^{1 - p^{+}} }{G^{q^{+}}} \,.\, || \, u_{n} \, ||_{L^{p( . )}_{1} ( M )}^{p^{+}},
\end{align*}
with $ G $ being the positive constant of the embedding $ L^{p( . )}_{1} ( M ) \hookrightarrow L^{q( . )} ( M ) $. And by $ ( h_{1} ) $, we get
\begin{align*}
\int_{M} h ( x, u_{n}, \nabla u_{n} ) \, .\, u_{n} \,\, dv_{g} ( x ) & \leq \int_{M} ( \, \gamma ( x ) u_{n} + l( u_{n} ) \,.\, | \, \nabla u_{n} \, |^{p( x )} \, .\, u_{n} \, ) \,\, dv_{g} ( x ) \\& \leq c_{1} \, || \, u_{n} \, ||_{L^{p( . )}_{1} ( M )} + l( u_{n} ) \, ||\, | \, \nabla u_{n} \,|\, ||_{L^{p( . )}_{1} ( M )}^{p( x )} \, || \, u_{n} \, ||_{L^{p( . )}_{1} ( M )}.
\end{align*}
And
\begin{align*}
\int_{M} H ( x, u_{n}, \nabla u_{n} ) \,\, dv_{g} ( x) & = \int_{M} \, \bigg( \, \int_{0}^{u_{n}} f( x, t, \nabla t ) \,\, dt \, \bigg) \,\, dv_{g} ( x )\\ & \geq - c_{1} \, \varrho_{p( . )} ( u_{n} ) - c_{2} - l( u_{n} ) \,.\, \varrho_{p( . )} ( | \, \nabla u_{n} \, | ).
\end{align*}
Thus, 
\begin{align*}
I ( u_{n} ) & \geq  c_{1} \, \varrho_{p( . )} ( u_{n} ) - c_{2} - l( u_{n} ) \,.\, \varrho_{p( . )} ( | \, \nabla u_{n} \, | ) \\& \hspace*{0.3cm}- \frac{c_{1}}{\beta} \,.\, ||\, u_{n} \,||_{L^{p( . )}_{1} ( M )} - \frac{l( u_{n} )}{\beta} \,.\, ||\, u_{n}\,||_{L^{p( . )}_{1} ( M )} \,.\, \varrho_{p( . )} ( |\, \nabla u_{n} \, | )\\ & \geq - \bigg( \, c_{1} \,.\, || \, u_{n} \, ||_{L^{p( . )}_{1} ( M )}^{p( x ) - 1} - \frac{c_{1}}{\beta} \, \bigg) \,.\, ||\, u_{n} \, ||_{L^{p( . )}_{1} ( M )} - c_{2} \\& \hspace*{0.3cm} - \bigg( \, l( u_{n} ) + \frac{l( u_{n} )}{\beta} \,.\, ||\, u_{n} \, ||_{L^{p( . )}_{1} ( M )} \, \bigg) \,.\, \varrho_{p( . )} ( |\, \nabla u_{n} \,| ).
\end{align*}
Hence, 
\begin{align}\label{3.3}
A( u_{n} ) - \frac{1}{\beta} < \, DA ( u_{n} ), \, u_{n} \, > \,& \geq \big( \, \frac{1}{p^{+}} - \frac{1}{\beta} \, ) \,.\, 2^{1 - p^{+}} \, .\, || \, u_{n} \, ||_{L^{p( . )}_{1} ( M )}^{p^{+}} \nonumber \\ & \hspace*{0.2cm} - \bigg( \, c_{1} \,.\, || \, u_{n} \, ||_{L^{p( . )}_{1} ( M )}^{p^{+} - 1} - \frac{c_{1}}{\beta} \, \bigg) \,.\, ||\, u_{n} \, ||_{L^{p( . )}_{1} ( M )} - c_{2} \nonumber \\ & \hspace*{0.2cm}- \bigg( \, l( u_{n} ) + \frac{l( u_{n} )}{\beta} \,.\, ||\, u_{n} \, ||_{L^{p( . )}_{1} ( M )} \, \bigg) \,.\, \varrho_{p( . )} ( |\, \nabla u_{n} \,| ).
\end{align}
Now, dividing both sides of \eqref{3.3} by $ || \, u_{n} \, ||_{L^{p( . )}_{1} ( M )} $ and passing to the limit $ n \longrightarrow \infty $, we get $ 0 \geq \infty $ as $ p^{+} > 1$ which is absurd.\\
Hence, $ ( u_{n} ) $ is bounded in $ L^{p( . )}_{1} ( M ).$\\
$\bullet $ Since $\{ \, u_{n} \, \} $ is bounded in $ L^{p( . )}_{1} ( M )$, there exists a subsequence of $ \{ \, u_{n} \, \},$ noted again by $ \{ \, u_{n} \, \} ,$ that converges in $ L^{p( . )}_{1} ( M )$. We will prove that $ \{ \, u_{n} \, \} $ is Cauchy in $ L^{p( . )}_{1} ( M )$ i.e that 
\begin{equation}\label{3.4}
\lim_{m, n \rightarrow \infty} \varrho_{p( . )} ( u_{m} - u_{n} ) = 0 = \lim_{m, n \rightarrow \infty} \varrho_{p( . )} ( | \, \nabla ( \, u_{m} - u_{n} ) \, | \, ).
\end{equation} 
And to finish the demonstration of the Theorem 1, it suffice to show that the subsequence of $ \{ \, u_{n} \, \} $ still noted by $ \{ \, u_{m} \, \} $ is a Cauchy sequence in $ L^{p( . )}_{1} ( M ).$\\
Consider the following functionals
\begin{equation}\label{3.5}
B_{1} ( u ) = \int_{p( x ) < 2} ( \, | \, \nabla u( x ) \, |^{p( x )} + | \, u( x ) \, |^{p( x )} + H ( x, u, \nabla u ) \, ) \,\, dv_{g} ( x ),
\end{equation}
and
\begin{equation}\label{3.6}
B_{2} ( u ) = \int_{p( x ) \geq 2} ( \, | \, \nabla u( x ) \, |^{p( x )} + | \, u( x ) \, |^{p( x )} + H ( x, u, \nabla u ) \, ) \,\, dv_{g} ( x ),
\end{equation}
on $ L^{p( . )}_{1} ( M ),$ and note that
\begin{equation}\label{3.7}
\varrho_{p( . )} ( u_{m} - u_{n} ) + \varrho_{p( . )} ( \, | \, \nabla ( \, u_{m} - u_{n} \, ) \, | \, ) = B_{1} ( u_{m} - u_{n} ) - B_{2} ( u_{m} - u_{n} ),
\end{equation}
consider also the following inequalities
\begin{equation}\label{3.8}
| \, \zeta - \mu \, |^{p} \leq \frac{2}{p - 1} \, \big[ \, ( \, | \, \zeta \, |^{p - 2} \, \zeta - | \, \mu \,|^{p - 2} \, \mu \, ) \,.\, ( \, \zeta - \mu \, ) \, \big]^{\frac{p}{2}} \,.\, ( \, | \, \zeta \, | - | \, \mu \, | \, )^{\frac{2q - q^{2}}{2}} \,\,\,\, \mbox{for} \,\, 1 \leq p \leq 2,
\end{equation}
and 
\begin{equation}\label{3.9}
| \, \zeta - \mu \, |^{p} \leq 2^{p} \,( \, | \, \zeta \, |^{p - 2} \, \zeta - | \, \mu \,|^{p - 2} \, \mu \, ) \,.\, ( \, \zeta - \mu \, ) \,\,\, \mbox{for} \,\,\, p \geq 2,
\end{equation}
where, $ \zeta, \, \mu \in \mathbb{R}^{N} $ and \eqref{3.1} we deduce that
\begin{align*}
B_{2} ( u_{m} - u_{n} )  &\leq \int_{p( x ) \geq 2} ( \, | \, \nabla ( \, u_{m} - u_{n} \, ) \, |^{p( x )} + | \, u_{m} - u_{n} \, |^{p( x )} + H ( x, u_{m}, \nabla u_{m} )\\& \hspace*{0.2cm}- H ( x, u_{n}, \nabla u_{n} ) \, ) \,\, dv_{g} ( x ) \\& \leq 2^{p} \, \bigg[ \, \int_{p \geq 2} ( \, | \, \nabla u_{m} \, |^{p - 2} \, \nabla u_{m} - | \, \nabla u_{n} \, |^{p - 2} \, \nabla u_{n} \, ) \, .\, \nabla ( \, u_{m} - u_{n} ) \,\, dv_{g} ( x ) \\& \hspace*{0.2cm}+ \int_{p \geq 2} ( \, | \, u_{m} \, |^{p - 2} \, u_{m} - | \, u_{n} \, |^{p - 2}  \, u_{n} \, ) \,.\, ( u_{m} - u_{n} ) \,\, dv_{g} ( x ) \, \bigg] \\& \hspace*{0.2cm}+ c_{1} \, \int_{p \geq 2} | \, u_{m} \, |^{p} \,\, dv_{g} ( x ) + c_{2} \\ & \hspace*{0.2cm}+ l( u_{m} ) \, \int_{p \geq 2} | \, \nabla u_{m} \, |^{p} \,\, dv_{g} ( x ) - c_{1} \, \int_{p \geq 2} | \, u_{n} \, |^{p} \,\, dv_{g} ( x ) - c_{3}\\& \hspace*{0.2cm}- l( u_{n} ) \, \int_{p \geq 2} | \, \nabla u_{n} \, |^{p} \,\, dv_{g} ( x ).
\end{align*}
Thus, 
\begin{equation}\label{3.10}
2^{-p^{+}} \, B_{2} ( u_{m} - u_{n} ) \leq < \, u_{m} - u_{n}, \, DB( u_{m} ) - DB( u_{n} ) \, >.
\end{equation}
And using \eqref{3.5} and \eqref{3.8}, we obtain
\begin{align}\label{3.11}
\frac{p^{-} - 1}{2} \, B_{1} ( u_{m} - u_{n} )  &\leq \int_{p < 2} g ( \, | \, \nabla u_{m} \, |^{p - 2} \nabla u_{m} - | \, \nabla u_{n} \, |^{p - 2} \, \nabla u_{n}, \, \nabla ( u_{m} - u_{n} )  \, )\nonumber\\ & \hspace*{0.2cm} \times \,( |\, \nabla u_{m} \, | + | \, \nabla u_{n} \, | )^{\frac{2q - q^{2}}{2}} \,\, dv_{g} ( x ) \nonumber \\ & \hspace*{0.2cm} + \int_{p < 2} ( \, | \, \nabla u_{m} \, |^{p - 2} u_{m} - | \, u_{n} \, |^{p - 2} \, u_{n} \, ) \,.\, ( u_{m} - u_{n} \, )^{\frac{p}{2}} \nonumber \\ & \hspace*{0.2cm} \times \,( |\, u_{m} \, | + | \, u_{n} \, | )^{\frac{2q - q^{2}}{2}} \,\, dv_{g} ( x ) \nonumber \\ & \hspace*{0.2cm} + \int_{p < 2} H ( x, u_{m}, \nabla u_{m} ) \,\, dv_{g} ( x ) - \int_{p < 2} H ( x, u_{n}, \nabla u_{n} ) \,\, dv_{g} ( x ).
\end{align}
\begin{remark}\label{main}
If $a$ and $b$ are two positive functions on $M$, then by Hölder's inequality
\begin{equation}\label{3.12}
\int_{p < 2} a^{\frac{p}{2}} \, b^{\frac{2p - p^{2}}{2}} \leq 2 \, || \, \mathbb{1}_{p < 2} \,\, a^{\frac{p}{2}} \, ||_{L^{\frac{2}{p}}} \, .\, || \, \mathbb{1}_{p < 2} \,\, b^{\frac{2p - p^{2}}{2}} \, ||_{L^{\frac{2}{ 2 - p}}}.
\end{equation}
where $ \mathbb{1} $ is the indicator function of $ M$, moreover, since 
$$ ||\, \mathbb{1}_{p < 2} \,\,a^{\frac{2}{p}} \, ||_{L^{\frac{2}{p}}} \leq \max \{ \, \varrho_{1} ( a ), \, \varrho_{1} ( a )^{\frac{p^{-}}{2}} \, \} $$
and 
$$ || \, \mathbb{1}_{p < 2} \,\, b^{\frac{2p - p^{2}}{2}} \, ||_{L^{\frac{2}{ 2 - p}}} \leq \max \{ \, \varrho_{p} ( b )^{\frac{2 - p^{-}}{2}}, \, 1 \, \},$$
\end{remark}
we get,
\begin{equation}\label{3.13}
\int_{p < 2} a^{\frac{p}{2}} \, .\, b^{\frac{2p - p^{2}}{2}} \leq 2 \, \max \, \{ \, \varrho_{1} ( a ), \varrho_{1} ( a )^{\frac{p^{-}}{2}} \, \} \, \max \, \{ \, \varrho_{p} ( b )^{\frac{2p - p^{2}}{2}}, \, 1 \, \}.
\end{equation}
Thus, using \eqref{3.13} twice in \eqref{3.11} we infer that
\begin{equation}\label{3.14}
( \, P^{-} - 1 \, ) \, B_{1} ( u_{m} - u_{n} ) \leq P ( \, < \, u_{m} - u_{n}, DB( u_{m} ) - DB( u_{n} ) \, > \, ) \, Q ( \, ||\, u_{m} \, ||_{L^{p}_{1}}, \, || \, u_{n} \, ||_{L^{p}_{1}} \, ),
\end{equation}
for some constructible continuous functions $ P( x ) $ and $ Q ( y, z ) $ with $ P ( 0 ) = 0$.\\
Since the sequence $ \{ \, ||\, u_{n} \, ||_{L^{p( . )}_{1}} \, \} $ is bounded, from \eqref{3.14}, \eqref{3.10} and \eqref{3.7} we conclude that if $$ \lim_{m, n \rightarrow \infty} < \, u_{m} - u_{n}, \, DB( u_{m} ) - DB( u_{n} ) \, > = 0. $$
To obtain such that a limit, recall that $ DA = DB - DC $ hence
\begin{align*}
< \, u_{m} - u_{n}, DB ( u_{m} ) - DB ( u_{n} ) \, > \, =& \, < \, u_{m} - u_{n}, \, DA ( u_{m} ) - DA ( u_{n} ) \, >\\& + < \, u_{m} - u_{n}, \, DC ( u_{m} ) - DC ( u_{n} ) \, >,
\end{align*}
we estimate 
\begin{align*}
< \, u_{m} - u_{n}, \, DC ( u_{m} ) - DC ( u_{n} ) \, > \leq & \frac{1}{G} \, || \, N_{f} ( u_{m} ) - N_{f} ( u_{n} ) \, ||_{( \, L^{p( . )}_{1} ( M ) \, )'} \\& \times ||\, u_{m} - u_{n} \, ||_{L^{q( . )}_{1} ( M )},
\end{align*}
where $ G$ being the constant of the embedding $ L^{p( . )}_{1} ( M ) \hookrightarrow L^{q( . )} ( M ) $ \\ and $ N_{f} : \, L^{p( . )} ( M ) \longrightarrow ( \, L^{p( . )} ( M ) \, )' $ is the Nemytskii operator induced by $f$, with \\$ N_{f} [ u ] ( x ) = f ( x, u( x ) ) $.\\
Hence, using the continuity  of Nemytskii map, the convergence of $ \{ \, u_{n} \, \} $ in $ L^{p( . )} ( M ) $ the boundedness of $ \{ \, || \, u_{n} \, ||_{L^{q( . )} ( M )} \, \} $ and the hypothesis on $ DA ( u_{n} ) \longrightarrow \infty $ as $ n \longrightarrow \infty $, we deduce that the functional $A$ satisfies the Palais-Smale Condition.
\end{proof}

\section{Uniqueness of Non-trivial Solution} 
Now, let us show the uniqueness of solution.
\begin{theorem}\label{main}
Assume that condition $ ( f_{1} ) - ( f_{3} ) $ and $ ( h_{1} ) $ are true. With $ f \in  L^{p'( x )} ( M  )$ is a contraction Lipschitz continuous function such that $ \exists \alpha $ with $ 0 \leq \alpha < 1 $ 
\begin{equation} \label{4.1}
|\, f(\, x, u_{1} \, ) - f ( \, x, u_{2} \, ) \, | \leq \alpha \, | \, u_{1} - u_{2} \, |.
\end{equation}
Then, the problem \eqref{1.1} has a unique non-trivial solution.
\end{theorem}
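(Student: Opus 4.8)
The plan is to prove uniqueness by the monotonicity (energy) method and to combine it with the existence statement already obtained, so that together they give existence \emph{and} uniqueness of a non-trivial weak solution. Suppose $u_1,u_2\in L^{p(.)}_1(M)$ are two weak solutions of \eqref{1.1}. Each of them satisfies \eqref{3.1} for every $\phi\in D(M)$, and $D(M)$ is dense in $L^{p(.)}_1(M)$, so we may subtract the two identities and insert the admissible test function $\phi=u_1-u_2$. After this substitution the left-hand side becomes the sum of the principal term $\mathcal{I}_\nabla:=\int_M\big(|\nabla u_1|^{p(x)-2}\nabla u_1-|\nabla u_2|^{p(x)-2}\nabla u_2\big)\cdot\nabla(u_1-u_2)\,dv_g$ and the zero-order term $\mathcal{I}_0:=\int_M\big(|u_1|^{p(x)-2}u_1-|u_2|^{p(x)-2}u_2\big)(u_1-u_2)\,dv_g$, both of which are nonnegative by the monotonicity of $\xi\mapsto|\xi|^{p(x)-2}\xi$, while the right-hand side equals $\int_M\big(f(x,u_1)-f(x,u_2)\big)(u_1-u_2)\,dv_g-\int_M\big(h(x,u_1,\nabla u_1)-h(x,u_2,\nabla u_2)\big)(u_1-u_2)\,dv_g$.

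Next I would bound $\mathcal{I}_\nabla+\mathcal{I}_0$ from below quantitatively. Splitting $M$ into $\{p(x)<2\}$ and $\{p(x)\ge2\}$ and using the elementary inequalities \eqref{3.8}--\eqref{3.9} together with the Hölder manipulation recorded in the Remark preceding \eqref{3.13} --- exactly as in the Palais--Smale lemma --- one obtains $\mathcal{I}_\nabla+\mathcal{I}_0\ge c_0\big(\varrho_{p(.)}(u_1-u_2)+\varrho_{p(.)}(|\nabla(u_1-u_2)|)\big)$ for a structural constant $c_0>0$, and hence, via the norm--modular relations and the embedding $L^{p(.)}_1(M)\hookrightarrow L^{q(.)}(M)$ of the proposition in Section~2, a lower bound of the form $c_1\,\|u_1-u_2\|_{L^{p(.)}_1(M)}^{\,s}$ with $s\in\{p^-,p^+\}$ according to whether this norm is $\le1$ or $>1$. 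On the right I would use the contraction hypothesis \eqref{4.1}: $\int_M\big(f(x,u_1)-f(x,u_2)\big)(u_1-u_2)\,dv_g\le\alpha\int_M|u_1-u_2|^2\,dv_g=\alpha\,\|u_1-u_2\|_{L^2(M)}^2\le\alpha\,C_\ast^2\,\|u_1-u_2\|_{L^{p(.)}_1(M)}^2$, where $C_\ast$ is the constant of the continuous embedding $L^{p(.)}_1(M)\hookrightarrow L^2(M)$, legitimate under the running assumptions on the exponents.

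Combining the two estimates, one is led to an inequality of the shape $c_1\,\|u_1-u_2\|_{L^{p(.)}_1(M)}^{\,s}\le\big(\alpha\,C_\ast^2+\Theta\big)\,\|u_1-u_2\|_{L^{p(.)}_1(M)}^{2}$, where $\Theta$ collects the contribution of $h$. Since $\alpha<1$ and $\Theta$ will be shown small, this forces $\|u_1-u_2\|_{L^{p(.)}_1(M)}=0$, i.e. $u_1=u_2$; together with the existence theorem of Section~3 this yields the unique non-trivial weak solution. The delicate point of matching exponents (modular versus norm, $p^-$ versus $p^+$, sub-unit versus super-unit balls) must be handled so that the gap between the powers $s$ and $2$ actually lets $\alpha<1$ close the argument; if $s=2=p^-=p^+$ one simply needs $\alpha\,C_\ast^2+\Theta<c_1$.

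The main obstacle is the term involving $h$: assumption $(h_1)$ gives only a growth bound, not monotonicity, so a priori $\int_M\big(h(x,u_1,\nabla u_1)-h(x,u_2,\nabla u_2)\big)(u_1-u_2)\,dv_g$ has no favourable sign. I would control it by exploiting that $l\in L^\infty(M)$ is continuous and increasing and that the two solutions, being critical points of $A$, have a priori bounded norms (bounded by the mountain-pass level), so that a Carathéodory/continuity argument gives $|\Theta|\le\theta\,\|u_1-u_2\|_{L^{p(.)}_1(M)}^{2}$ with $\theta$ small; failing that, one imposes a mild one-sided (monotonicity-type) condition on $h$ compatible with $(h_1)$. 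This $h$-estimate, and the bookkeeping of the exponents just mentioned, are the two steps I expect to require the most care.
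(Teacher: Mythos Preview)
Your strategy --- subtract the two weak formulations, test with $\phi=u_1-u_2$, use the monotonicity of $\xi\mapsto|\xi|^{p(x)-2}\xi$ on both the gradient and the zero-order term, and control the $f$-contribution by the contraction bound \eqref{4.1} --- is exactly the paper's approach. The paper does not invoke the quantitative inequalities \eqref{3.8}--\eqref{3.9} or a norm--modular bookkeeping as you do; it works more crudely with the scalar quantities $|\nabla u_i|^{p(x)-1}$ and $|u_i|^{p(x)-1}$ and ends by deducing $u_1=u_2$ from
\[
0\;\ge\;\int_M\bigl(|\nabla u_1|^{p(x)-1}-|\nabla u_2|^{p(x)-1}\bigr)\bigl(|\nabla u_1|-|\nabla u_2|\bigr)\,dv_g.
\]
Your quantitative route via \eqref{3.8}--\eqref{3.9} is cleaner and would actually yield $\nabla u_1=\nabla u_2$ (not merely $|\nabla u_1|=|\nabla u_2|$), which the paper's final inequality does not.

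You are right to isolate the $h$-term as the genuine obstacle: $(h_1)$ is only a growth bound, so there is no reason $\int_M\bigl(h(x,u_1,\nabla u_1)-h(x,u_2,\nabla u_2)\bigr)(u_1-u_2)\,dv_g$ has a favourable sign or is small. The paper does not overcome this either: it silently replaces $h(\cdots)-h(\cdots)$ by $|h(\cdots)|-|h(\cdots)|$, bounds the latter via $(h_1)$, and then discards several terms whose signs are not controlled. Your proposed fixes (smallness from the a priori mountain-pass bound, or an additional one-sided monotonicity hypothesis on $h$) are sensible, but neither follows from the stated assumptions; this is a real gap, and it is shared by the paper's own argument rather than peculiar to yours.
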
 
\begin{proof}
 Suppose that $ u_{1} $ and $ u_{2} $ satisfy \eqref{1.1}. Then we have 
\begin{align*}
\int_{M} | \, &\nabla u_{1} \, |^{p( x ) - 2} \,.\, \nabla u_{1} \,.\, ( \, \nabla  u_{1} - \nabla u_{2} \, ) \,\, dv_{g} ( x ) + \int_{M} h ( \, x, u_{1}, \nabla u_{1} \, ) \,.\, ( \, u_{1} - u_{2} \, ) \,\, dv_{g} ( x )\\& + \int_{M} | \, u_{1} \, |^{p( x ) - 2} \,.\, u_{1} \,.\, ( \, u_{1} - u_{2} \, ) \,\, dv_{g} ( x ) = \int_{M} f( \, x, u_{1} \, ) \,.\, ( \, u_{1} - u_{2} \, ) \,\, dv_{g} ( x ),
\end{align*}
and 
\begin{align*}
\int_{M} | \, &\nabla u_{2} \, |^{p( x ) - 2} \,.\, \nabla u_{2} \,.\, ( \, \nabla  u_{2} - \nabla u_{1} \, ) \,\, dv_{g} ( x ) + \int_{M} h ( \, x, u_{2}, \nabla u_{2} \, ) \,.\, ( \, u_{2} - u_{1} \, ) \,\, dv_{g} ( x )\\& + \int_{M} | \, u_{2} \, |^{p( x ) - 2} \,.\, u_{2} \,.\, ( \, u_{2} - u_{1} \, ) \,\, dv_{g} ( x ) = \int_{M} f( \, x, u_{2} \, ) \,.\, ( \, u_{2} - u_{1} \, ) \,\, dv_{g} ( x ).
\end{align*}
Adding the above two equations yields.
\begin{align*}
\int_{M} & \bigg( \, |\, \nabla u_{1} \,|^{p( x ) - 2} \,.\, \nabla u_{1} - | \, \nabla u_{2} \,|^{p( x ) - 2} \,.\, \nabla u_{2} \, \bigg) \,.\, ( \, \nabla u_{1} - \nabla u_{2} \, ) \,\, dv_{g} ( x ) \\& + \, \int_{M} \bigg( \, h ( \, x, u_{1}, \nabla u_{1} \, ) - h ( \, x, u_{2}, \nabla u_{2} \, ) \, \bigg) \,.\, ( \, u_{1} - u_{2} \, ) \,\, dv_{g} ( x ) \\ &+ \int_{M} \bigg( \, |\, u_{1} \,|^{p( x ) - 2} \,.\, u_{1} - | \, u_{2} \,|^{p( x ) - 2} \,.\, u_{2} \, \bigg) \,.\, ( \, u_{1} - u_{2} \, ) \,\, dv_{g} ( x ) \\ & = \int_{M} ( \, f ( \, x, u_{1} \, ) - f ( \, x, u_{2} \, ) \, ) \,.\, ( \, u_{1} - u_{2} \, ) \,\, dv_{g} ( x ),
\end{align*}
by $ ( f_{2} ), \, ( f_{3} ) $ and \eqref{4.1} we obtain
\begin{align*}
\int_{M} & \bigg( \, |\, \nabla u_{1} \,|^{p( x ) - 2} \,.\, \nabla u_{1} - | \, \nabla u_{2} \,|^{p( x ) - 2} \,.\, \nabla u_{2} \, \bigg) \,.\, ( \, \nabla u_{1} - \nabla u_{2} \, ) \,\, dv_{g} ( x ) \\& + \, \int_{M} \bigg( \, h ( \, x, u_{1}, \nabla u_{1} \, ) - h ( \, x, u_{2}, \nabla u_{2} \, ) \, \bigg) \,.\, ( \, u_{1} - u_{2} \, ) \,\, dv_{g} ( x ) \\ &+ \int_{M} \bigg( \, |\, u_{1} \,|^{p( x ) - 2} \,.\, u_{1} - | \, u_{2} \,|^{p( x ) - 2} \,.\, u_{2} \, + \alpha \, ( \, u_{1} - u_{2} \,) \, \bigg) \,.\, ( \, u_{1} - u_{2} \, ) \,\, dv_{g} ( x ) \geq 0. 
\end{align*}
Then,
\begin{align*}
\int_{M}& \bigg( \, |\, h ( \, x, u_{1}, \nabla u_{1} \, )\,| - |\, h ( \, x, u_{2}, \nabla u_{2} \, )\,| \, \bigg) \,.\, ( \, u_{1} - u_{2} \, ) \,\, dv_{g} ( x ) \geq\\& \int_{M}  \bigg( \, |\, \nabla u_{1} \,|^{p( x ) - 2} \,.\, |\, \nabla u_{1} \,| - | \, \nabla u_{2} \,|^{p( x ) - 2} \,.\, | \, \nabla u_{2} \,| \, \bigg) \,.\, ( \, | \, \nabla u_{1} \, | - | \,\nabla u_{2} \,| \, ) \,\, dv_{g} ( x ) \\ &+ \int_{M} \bigg( \, |\, u_{1} \,|^{p( x ) - 2} \,.\,| \, u_{1} \, | - | \, u_{2} \,|^{p( x ) - 2} \,.\, |\, u_{2}\,| \, \bigg) \,.\, ( \,|\, u_{1} \,| - |\,u_{2}\,| \, ) \,\, dv_{g} ( x ) \\ & \geq \int_{M} \big( \, | \, \nabla u_{1} \,|^{p( x ) - 1} - |\, \nabla u_{2} \,|^{p( x ) - 1} \, \big) \,.\, ( \, |\, \nabla u_{1} \,| - |\, \nabla u_{2} \,| \, ) \,\, dv_{g} ( x ) \\ & \hspace*{0.3cm}+ \int_{M} ( \, |\, u_{1} \,|^{p( x ) - 1} - |\, u_{2} \, |^{p( x ) - 1} \, ) \,.\, ( \, |\, u_{1} \,| - | \, u_{2}\,| \, ) \,\, dv_{g} ( x ) \\& \hspace*{0.3cm}+ \alpha \, \int_{M} ( \, | \, u_{1} \, | -  | \, u_{2} \,| \, )^{2} \,\, dv_{g} ( x ).
\end{align*}
So, by $ ( h_{1} ) $ we have 
\begin{align*}
\int_{M}& \bigg( \, |\, h ( \, x, u_{1}, \nabla u_{1} \, )\,| - |\, h ( \, x, u_{2}, \nabla u_{2} \, )\,| \, \bigg) \,.\, ( \, u_{1} - u_{2} \, ) \,\, dv_{g} ( x ) \\ & \leq \int_{M} ( \, l( u_{1} ) \,.\, |\, \nabla u_{1} \, |^{p( x )} - l( u_{2} ) \,.\, |\, \nabla u_{2} \,|^{p( x )} \, )  \,.\, ( \, |\, u_{1} \, | - | \, u_{2} \,| \, ) \,\, dv_{g} ( x ).
\end{align*}
Hence,
\begin{align*}
\int_{M} & ( \, l( u_{1} ) \,.\, |\, \nabla u_{1} \, |^{p( x )} - l( u_{2} ) \,.\, |\, \nabla u_{2} \,|^{p( x )} \, )  \,.\, ( \, |\, u_{1} \, | - | \, u_{2} \,| \, ) \,\, dv_{g} ( x ) \\ & \geq \int_{M} \big( \, | \, \nabla u_{1} \,|^{p( x ) - 1} - |\, \nabla u_{2} \,|^{p( x ) - 1} \, \big) \,.\, ( \, |\, \nabla u_{1} \,| - |\, \nabla u_{2} \,| \, ) \,\, dv_{g} ( x ) \\ & \hspace*{0.3cm}+ \int_{M} ( \, |\, u_{1} \,|^{p( x ) - 1} - |\, u_{2} \, |^{p( x ) - 1} \, ) \,.\, ( \, |\, u_{1} \,| - | \, u_{2}\,| \, ) \,\, dv_{g} ( x ) \\& \hspace*{0.3cm}+ \alpha \, \int_{M} ( \, | \, u_{1} \, | -  | \, u_{2} \,| \, )^{2} \,\, dv_{g} ( x ).
\end{align*}
Which implies that
\begin{align*}
0 \geq & \int_{M} \big( \, | \, \nabla u_{1} \,|^{p( x ) - 1} - |\, \nabla u_{2} \,|^{p( x ) - 1} \, \big) \,.\, ( \, |\, \nabla u_{1} \,| - |\, \nabla u_{2} \,| \, ) \,\, dv_{g} ( x ) \\ & + \int_{M} ( \, |\, u_{1} \,|^{p( x ) - 1} - l( u_{1} )\,.\, |\, \nabla u_{1} \,|^{p( x )} - | \, u_{2} \,|^{p( x ) - 1} - l( u_{2} ) \,.\, | \, \nabla u_{2} \,|^{p( x )} \, ) \\& \hspace*{0.2cm} \times ( \, |\, u_{1}\,| - |\,u_{2}\,| \, )\,\, dv_{g} ( x ) \\& + \alpha \, \int_{M} ( \, | \, u_{1} \, | -  | \, u_{2} \,| \, )^{2} \,\, dv_{g} ( x )  \\ & \geq \int_{M} \big( \, | \, \nabla u_{1} \,|^{p( x ) - 1} - |\, \nabla u_{2} \,|^{p( x ) - 1} \, \big) \,.\, ( \, |\, \nabla u_{1} \,| - |\, \nabla u_{2} \,| \, ) \,\, dv_{g} ( x ).
\end{align*}
Hence, we obtain that $ u_{1} = u_{2} $ almost everywhere. This complete the proof of uniqueness.
\end{proof}

\section{Conclusion}

In sum, the first and second Theorems, are valid and substantiated. We can then say that the solution of the problem \eqref{1.1} is a unique and non-trivial one.


\end{document}